\newtheorem{theorem}{Theorem}
\newtheorem{corollary}[theorem]{Corollary}
\newtheorem{proposition}[theorem]{Proposition}
\newtheorem{lemma}[theorem]{Lemma}
\newtheorem{definition}[theorem]{Definition}
\newtheorem{remark}[theorem]{Remark}
\numberwithin{theorem}{section}
\numberwithin{equation}{section}
\newcounter{relctr} 
\everydisplay\expandafter{\the\everydisplay\setcounter{relctr}{0}} 
\newcommand{\mylabel}[2]{#2\def\@currentlabel{#2}\label{#1}}
\begin{document}
	\title[Extrapolation on capacitary spaces]{An Extrapolation Theorem in the setting of Hausdorff capacities}
	\author[A Deshmukh]{Aniruddha Deshmukh}
	\address{Department of Mathematics, Indian Institute of Technology Indore, Khandwa Road, Simrol, Indore, Madhya Pradesh-453552, India.}
	\email{aniruddha480@gmail.com}
	\date{\today}
	\begin{abstract}
In this article, we prove an analogue of the Rubio de Francia's extrapolation theorem in the setting of Hausdorff capacities. We prove the result using techniques analogous to those in the classical setting and using the recently developed theory of capacitary Muckenhoupt weights.
\end{abstract}
\maketitle
	\section{Introduction}
		\label{IntroductionSection}
		Capacities are an important aspect in the modern analysis, especially in potential theory. The notion of a capacity gives us a way to measure the ``size" of sets that might not be Lebesgue measurable. Cpacities are inherently non-linear objects and therefore provide a natural setting for many results in non-linear potential theory (for details, we refer the reader to \cite{Adams}). Choquet was one among the first to introduce the notion of a ``capacitable" set (see \cite{Choquet}). By a ``capacity" we mean a monotone set function $C: \mathcal{P} \left( \mathbb{R}^n \right) \rightarrow \left[ 0, \infty \right]$ with $C \left( \emptyset \right) = 0$ and that satisfies countable subadditivity. That is, for a countable collection $\left( E_k \right)_{k \in \mathbb{N}}$ of subsets of $\mathbb{R}^n$, we have,
		$$C \left( \bigcup\limits_{k \in \mathbb{N}} E_k \right) \leq \sum\limits_{k \in \mathbb{N}} C \left( E_k \right).$$
		Choquet, in \cite{Choquet}, defined capacities with the conditions of inner and outer regularity. It was pointed out by the authors of \cite{PonceSpector} that the regularity assumptions are often unnatural and the capacities that arise in practice are often neither inner regular nor outer regular. We, therefore, do not dwell much upon these assumptions. With the definition of capacities at hand, one may define integrals of non-negative functions with respect to capacities. Such integrals are now called \textit{Choquet integrals}. We give their precise definition here.
		\begin{definition}[Choquet Integral]
			\label{ChoquetIntegralDefinition}
			Let $C: \mathcal{P}(\mathbb{R}^n) \to [0,\infty]$ be a monotone set function. Then, the Choquet integral of a function $f : \mathbb{R}^n \to [0,\infty]$ with respect to $C$ is given by
			\begin{equation}\label{Choquet}
				\int_{\mathbb{R}^n} f\,\mathrm{d}C := \int_{0}^{\infty} C(\{x \in \mathbb{R}^n : f(x) > t\})\,\mathrm{d}t,
			\end{equation}
			where the right-hand side is a Lebesgue integral.
		\end{definition}
		If the integral given in Equation \eqref{Choquet} is finite, we say that $f$ is \textbf{Choquet integrable with respect to $C$}. With the notion of Choquet integrability at our disposal, we define function spaces in the setting of capacities analogous to the Lebesgue spaces in the classical setting. The defintion of capacitary $L^p$ spaces can be found in \cite{SpectorMaximal}. For a non-negative, monotone set function $C$ and for $1\leq p < \infty$, we define 
		\begin{equation}
			L^p(\mathbb{R}^n,C) := \left\{f: \mathbb{R}^n \to [-\infty,\infty] \Big| f \text{ is quasi-continuous and }\int_{\mathbb{R}^n} |f|^p\,\mathrm{d}C < \infty \right\}.
		\end{equation} 
		Here $f$ represents an equivalence class of quasi-continuous functions, equal to $f, C$-quasi everywhere (q.e.) in $\mathbb{R}^n$. By $f=g$ (q.e.), we mean that  $f(x) = g(x)$ except on a set $E$ with $C(E) = 0$. We refer the reader to  Definition $3.1$ in \cite{PonceSpector} for the definition of quasi-continuous functions. It is well-known that the Choquet integrals are, in general, not even sublinear, let alone linear. Consequently, capacitary Lebesgue spaces $L^p(\mathbb{R}^n,C)$ might not even be vector spaces for general capacities. However, under extra assumption on $C$, such as strong subadditivity (see Theorem 1.1 of \cite{PonceSpector}), it can be shown that $L^p(\mathbb{R}^n,C)$ is a real Banach space. We refer the reader to \cite{PonceSpector} for more details. Moreover, under the assumption of strong subadditivity, the quantity 
		\begin{equation}\label{norm}
			\|f\|_{L^p(\mathbb{R}^n,C)} := \left( \int_{\mathbb{R}^n}^{}|f|^p\,\mathrm{d}C \right)^\frac{1}{p},
		\end{equation}
		is a norm on $L^p(\mathbb{R}^n,C)$. In this article we use the symbol  $\|\cdot\|_{L^p(\mathbb{R}^n,C)}$ to denote the right-hand-side of of Equation \eqref{norm} for general capacities as well. It is to be understood that, in this setup, the symbol might not always mean a norm. For a non-negative monotone set function $C$ one may also define spaces of locally integrable functions as follows
		\begin{equation}
			\begin{aligned}
				L_{loc}^1(C) : = \Bigg\{ f: \mathbb{R}^n \to [-\infty,\infty] \Big| &f \text{ is quasi-continuous and} \\
				&\text{for each compact }K \subseteq \mathbb{R}^n, \int_{K}^{}|f|\hspace{0.1cm}\mathrm{d}C < \infty \Bigg\}.
			\end{aligned}
		\end{equation}
		One of the important capacities in literature is the \textit{Hausdorff content}. It arises in the study of Hausdorff measure and Hausdorff dimension (see for instance, p. 132 of \cite{Adams}). The latter are important concepts in the study of geometric measure theory. We refer the reader to \cite{Mattila1} and \cite{Mattila2} for further details on these topics. The Hausdorff content can be defined in terms of open balls or in terms of cubes. We give their definitions here.
		\begin{definition}[Hausdorff content]
			\label{HausdorffContent}
			The $\beta$-dimensional Hausdorff content is a set function defined on the power set of $\mathbb{R}^n$ as 
			\begin{equation}\label{Hausdorff}
				H_\infty^\beta(E) := \inf\left\{\sum_{j=1}^\infty \omega_\beta r_j^\beta : E \subseteq \bigcup_{j=1}^\infty B_j \right\},
			\end{equation}
			where $B_j$ is an open ball with radius $r_j$, $\omega_\beta = \frac{\pi^{\beta/2}}{\Gamma(\beta/2 +1)}$ and $0< \beta \leq n$.
		\end{definition}
		\begin{definition}[Cubic Hasudorff Content]
			\label{CubicHC}
			The cubic Hausdorff content is a set function $H_\infty^{\beta,c}: \mathcal{P}(\mathbb{R}^n) \rightarrow [0,\infty],$ defined as
			\begin{equation}
				H_\infty^{\beta,c} (E) := \inf\left \{\sum_{j=1}^{\infty}\ell (Q_j)^\beta : E \subseteq \bigcup_{i=1}^\infty Q_j \right\},
			\end{equation}
			where, $Q_j$ is a cube in $\mathbb{R}^n$ with sides parallel to the coordinate axes and side length $ \ell(Q_j)$. 
		\end{definition}
		One can easily see (due to the topological equivalence of cubes and open balls) that the Hausdorff content $H^{\beta}_{\infty}$ and the cubic Hausdorff content $H^{\beta, c}_{\infty}$ are equivalent. In the sequel, we use the same symbol $H^{\beta}_{\infty}$ to mean either of these definitions, when we do not care about the constants involved in the expressions.
		
		Apart from these definitions, one may also define Hausdorff contents using dyadic cubes. The precise definitions are as follows.
		\begin{definition}[Dyadic Hausdorff Content]
			Let $\mathcal{D}(Q_0)$ be the dyadic lattice associated to a cube $Q_0 \subseteq \mathbb{R}^n.$ The dyadic Hausdorff content is a set function $H_\infty ^{\beta,Q_0} : \mathcal{P}(\mathbb{R}^n) \rightarrow [0,\infty],$ defined by
			\begin{equation}\label{dyadic}
				H_\infty^{\beta,Q_0} (E) : = \inf\left\{\sum_{i=1}^{\infty}\ell(Q_i)^\beta : E \subseteq \bigcup_{i=1}^{\infty}Q_i, Q_i \in \mathcal{D}(Q_0)\right\},
			\end{equation}
			where $\ell(Q_i)$ is the side length of dyadic cube $Q_i.$
		\end{definition}
		It was shown in \cite{PonceSpector} that the dyadic Hausdorff content $H_{\infty}^{\beta, Q_0}$ and the Hausdorff content $H^{\beta}_{\infty}$ are equivalent and hence can be used interchangeably when one deals with (norm) inequalities. It was, in fact, observed by the authors of \cite{PonceSpector} that the dyadic Hausdorff content is better behaved that the Hausdorff content, in that it is strongly subadditive. As a consequence, the Choquet integral with respect to the dyadic Hausdorff content is sublinear. This becomes important later in our study. Also, for the ease of its handling, we shall always consider $Q_0 = \left[ 0, 1 \right)^n$ to define the dyadic Hausdorff content.
		
		With the notion of ``size" of sets (given by capacities) and the Choquet integral at one's disposal, one may talk about averaging and maximal operators with respect to capacities. This problem was taken up by the authors of \cite{SpectorMaximal} and later was revisited by the authors of \cite{Basak}. Although the (Hardy-Littlewood) maximal operator may be defined for general capacities, we give the definition only in the context of Hausdorff capacities, which is the main topic of this article.
		\begin{definition}[Hardy-Littlewood Maximal Operator]
			For a locally integrable function $f: \mathbb{R}^n \rightarrow \mathbb{R}$ with respect to the $\beta$-dimensional Hausdorff content $H^{\beta}_{\infty}$, we define the Hardy-Littlewood maximal function of $f$ as
			\begin{equation}
				\label{HLMaxFn}
				M_{\beta}f \left( x \right) := \sup\limits_{x \in Q} \left\lbrace \frac{1}{H^{\beta}_{\infty} \left( Q \right)} \int\limits_{Q} |f| \ \mathrm{d}H^{\beta}_{\infty} \right\rbrace,
			\end{equation}
			where, the supremum is taken over all cubes $Q$ containing $x$.
		\end{definition}
		\begin{remark}
			\normalfont
			In the classical setting, definition of the type given in Equation \eqref{HLMaxFn} is often referred to as ``\textit{non-centred cubic maximal function}". The classical Hardy-Littlewood maximal function is defined by taking the supremum over balls centered at $x$ instead of cubes. Again, it is easy to observe that these definitions are equivalent and can be used interchangeably when considering the problem of norm inequalities.
		\end{remark}
		Chen et al. in \cite{SpectorMaximal} proved that the maximal operator $M_{\beta}$ is bounded on the capacitary spaces $L^p \left( \mathbb{R}^n, H_{\infty}^{\beta} \right)$ by making use of techniques analogous to those in the classical setting. As a next step, the study of Muckenhoupt weights in the setting of Hausdorff capacities was natural. While in this classical setting the definition of weighted measure follows easily from the Reisz-Markov-Kakutani representation theorem, defining weighted capacities seems to be more intricate and subtle. One of the ways to realize the weighted Hausdorff content is (see \cite{Tang}),
		\begin{equation}\label{hmm}
			H_\omega^\beta (E) = \inf \sum_{j=1}^{\infty}\frac{\omega(Q_j)}{|Q_j|}\ell(Q_j)^\beta,
		\end{equation}
		where the infimum is taken over all coverings of $E$ by countable families of cubes $Q_j,$ with sides parallel to the coordinate axes with length $\ell(Q_j)$ and $\omega(Q) = \int_{Q}^{}\omega(x)\;\mathrm{d}x.$ However, it was pointed out by the author of \cite{Saito} (see Remark 1.4 of their article) that obtaining weak or strong inequalities for the maximal operator with $H_\omega^{\beta}$ as given in Equation \eqref{hmm} is difficult since there is no comparison between integrals with respect to $\mathrm{d}H_\omega^\beta$ and $\omega \ \mathrm{d}H^\beta$.
		
		The observations above led the authors of \cite{Huang} to define a new type of weighted Hausdorff content as follows. Given a non-negative function $w$ on $\mathbb{R}^n$, one defines the weighted Hausdorff capacity as the set function
		\begin{equation}
			\label{WeightedCapacity}
			w_{H^{\beta}_{\infty}} \left( E \right) := \int\limits_{E} w \left( x \right) \mathrm{d}H^{\beta}_{\infty}.
		\end{equation}
		 The study of weighted boundedness of the maximal operator $M_{\beta}$ was conducted by Haung et al. in \cite{Huang}, where the authors characterized all the weight functions $w$ that make the Hardy-Littlewood maximal operator $M_{\beta}$ bounded on $L^p \left( w \ \mathrm{d}H^{\beta}_{\infty} \right)$. Using their definition of the $A_{p, \beta}$ condition, the authors were able to show that the integrals with respect to $\mathrm{d}w_{H^{\beta}_{\infty}}$ and $w \ \mathrm{d}H^{\beta}_{\infty}$ are, in fact, comparable. They were also able to show other important results concerning the capacitary Muckenhoupt weights, such as the self-improving property and the Jones factorization theorem. We state here the results from \cite{Huang} that are important for this article.
		\begin{theorem}[Muckenhoupt $A_{p, \beta}$ weights]
			\label{ApWeightsTheorem}
			Let $0 < \beta \leq n$ and $1 < p < \infty$. Let $w$ be a weight function. Then, the following statements are equivalent.
			\begin{enumerate}
				\item There is a constant $K > 0$ such that for every $f \in L^p \left( \mathbb{R}^n, w \ \mathrm{d}H^{\beta}_{\infty} \right)$, we have,
				\begin{equation}
					\label{StrongPPInequality}
					\int\limits_{\mathbb{R}^n} \left| M_{\beta}f \left( x \right) \right|^p w \left( x \right) \mathrm{d}H^{\beta}_{\infty} \leq K \int\limits_{\mathbb{R}^n} \left| f \left( x \right) \right|^p w \left( x \right) \mathrm{d}H^{\beta}_{\infty}.
				\end{equation}
				\item There is a positive constant $A > 0$ such that for every cube $Q \subseteq \mathbb{R}^n$, we have,
				\begin{equation}
					\label{ApCondition}
					\left( \frac{1}{H^{\beta}_{\infty} \left( Q \right)} \int\limits_{Q} w \ \mathrm{d}H^{\beta}_{\infty} \right) \left( \frac{1}{H^{\beta}_{\infty} \left( Q \right)} \int\limits_{Q} w^{- \frac{1}{p - 1}} \mathrm{d}H^{\beta}_{\infty} \right)^{p - 1} \leq A.
				\end{equation}
			\end{enumerate}
		\end{theorem}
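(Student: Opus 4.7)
The plan is to prove the two implications separately, using the standard testing strategy in one direction and a weak-type plus interpolation strategy in the other, all adapted to the dyadic Hausdorff content, whose strong subadditivity makes Calderón–Zygmund arguments viable.

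For the implication $(1) \Rightarrow (2)$, I would carry out the classical testing argument. Fix an arbitrary cube $Q$ and consider the test function $f = w^{-1/(p-1)} \chi_Q$. For every $x \in Q$, one obtains the trivial pointwise lower bound
\begin{equation*}
M_\beta f(x) \geq \frac{1}{H^\beta_\infty(Q)} \int_Q w^{-1/(p-1)} \, \mathrm{d}H^\beta_\infty.
\end{equation*}
Substituting this into the hypothesised strong $(p,p)$ inequality \eqref{StrongPPInequality}, pulling the constant factor out of the integral, and simplifying the right hand side using $f^p w = w^{-1/(p-1)}$, one arrives after dividing by $\int_Q w^{-1/(p-1)} \, \mathrm{d}H^\beta_\infty$ at the $A_{p,\beta}$ condition \eqref{ApCondition} with $A = K$. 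A preliminary truncation argument (replacing $w$ by $\min(w, N)$ and passing to the limit) can be used to justify the division and the finiteness of the test integral.

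For the converse $(2) \Rightarrow (1)$, I would first prove a weak-type estimate and then upgrade via the self-improving property of $A_{p,\beta}$. Working with the dyadic Hausdorff content $H^{\beta, Q_0}_\infty$ (which is equivalent to $H^\beta_\infty$ and strongly subadditive), I would perform a stopping-time Calderón–Zygmund decomposition of $|f|$ at height $\lambda$, extracting maximal dyadic cubes $Q_j$ on which $\frac{1}{H^\beta_\infty(Q_j)} \int_{Q_j} |f| \, \mathrm{d}H^\beta_\infty > \lambda$. The set $\{M_\beta f > \lambda\}$ is then controlled by a bounded enlargement of $\bigcup_j Q_j$. Applying Hölder's inequality inside each $Q_j$ to split $|f| = (|f|^p w)^{1/p} w^{-1/p}$, then invoking \eqref{ApCondition} to dominate the resulting product of averages, gives a bound of the form $w_{H^\beta_\infty}(Q_j) \lesssim \lambda^{-p} \int_{Q_j} |f|^p w \, \mathrm{d}H^\beta_\infty$. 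Summing over $j$ and using the comparability of $w_{H^\beta_\infty}$ with $w \, \mathrm{d}H^\beta_\infty$ established in \cite{Huang} yields the weak $(p,p)$ inequality. To pass from weak to strong type at the same exponent, I would apply the self-improving property from \cite{Huang}: there exists $q \in (1, p)$ with $w \in A_{q, \beta}$, so the weak $(q, q)$ inequality holds. Combined with the trivial $L^\infty \to L^\infty$ bound, Marcinkiewicz interpolation delivers the strong $(p,p)$ bound \eqref{StrongPPInequality}.

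The main obstacle will be the Calderón–Zygmund decomposition and the control of $\{M_\beta f > \lambda\}$ by the stopping cubes. In the classical setting, disjointness of the dyadic stopping cubes combined with $\sigma$-additivity of Lebesgue measure allows one to add the local estimates freely; in the capacitary setting, additivity fails and only countable subadditivity (or strong subadditivity for the dyadic variant) is available. Consequently, the choice to work with $H^{\beta, Q_0}_\infty$ rather than $H^\beta_\infty$ is not cosmetic but essential, and the passage back to $H^\beta_\infty$ at the end depends on the equivalence of these contents and on the comparability \eqref{WeightedCapacity} of $\mathrm{d}w_{H^\beta_\infty}$ with $w \, \mathrm{d}H^\beta_\infty$ proved in \cite{Huang}.
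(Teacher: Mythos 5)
A preliminary remark: the paper does not prove this theorem at all. It is stated as background, quoted from \cite{Huang} (``We state here the results from \cite{Huang} that are important for this article''), so there is no in-paper proof to compare your argument against; your proposal has to be judged on its own merits. Your direction $(1)\Rightarrow(2)$ is the standard testing argument and is essentially sound: taking $f=w^{-1/(p-1)}\chi_Q$, the pointwise lower bound on $M_\beta f$ over $Q$ is immediate from the definition, and the algebra does produce \eqref{ApCondition} with $A=K$. One small correction: truncating $w$ from above by $\min(w,N)$ neither keeps hypothesis \eqref{StrongPPInequality} (which is assumed for $w$ itself) nor makes $\int_Q w^{-1/(p-1)}\,\mathrm{d}H^\beta_\infty$ finite; the standard fix is to truncate the \emph{test function}, e.g.\ $f=\min\bigl(w^{-1/(p-1)},N\bigr)\chi_Q$, and let $N\to\infty$.

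The direction $(2)\Rightarrow(1)$ has a genuine gap, and it sits exactly at the step you flag in your final paragraph without resolving. After extracting the maximal stopping cubes $Q_j$ and proving the local estimate $\int_{Q_j}w\,\mathrm{d}H^\beta_\infty\leq A\lambda^{-p}\int_{Q_j}|f|^pw\,\mathrm{d}H^\beta_\infty$, the classical argument concludes by summing and invoking additivity of the measure $|f|^pw\,\mathrm{d}x$ over the disjoint cubes. In the capacitary setting you need the superadditivity-type inequality $\sum_j\int_{Q_j}g\,\mathrm{d}H^\beta_\infty\lesssim\int_{\bigcup_jQ_j}g\,\mathrm{d}H^\beta_\infty$ for disjoint dyadic cubes, and this is \emph{false} in general for $\beta<n$: taking $g=1$ and the $2^{kn}$ dyadic subcubes of $[0,1)^n$ of generation $k$, the left side is $2^{k(n-\beta)}$ while the right side is $1$. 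Strong subadditivity of $H^{\beta,Q_0}_\infty$ gives sublinearity of the Choquet integral, which is the \emph{opposite} inequality to the one you need here, so ``working with the dyadic content'' does not repair this step. Overcoming precisely this failure of additivity is the nontrivial content of the capacitary maximal theorems (it is why the unweighted case in \cite{SpectorMaximal} and the weighted case in \cite{Huang} require arguments that do not reduce to the textbook Calder\'on--Zygmund summation), so acknowledging the obstacle without supplying the replacement argument leaves the proof of $(2)\Rightarrow(1)$ incomplete. The subsequent self-improvement plus Marcinkiewicz interpolation step is plausible (the layer-cake formula is the definition of the Choquet integral, and subadditivity of $H^\beta_\infty$ handles the splitting $f=f_1+f_2$), but it rests on the weak-type estimate that has not actually been established.
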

		\begin{remark}
			\normalfont
			Equation \eqref{ApCondition} is called the ``\textit{$A_{p, \beta}$ condition}".
		\end{remark}
		Similar to the classical setting, the authors of \cite{Huang} also define the $A_{1, \beta}$ condition as follows.
		\begin{definition}[$A_1$ condition]
			\label{A1ConditionDefinition}
			A weight function $w$ is said to satisfy the $A_1$ condition if there is a constant $K > 0$ such that for $H^{\beta}_{\infty}$-quasi-every $x \in \mathbb{R}^n$, we have
			\begin{equation}
				\label{A1Condition}
				M_{\beta}w \left( x \right) \leq K w \left( x \right).
			\end{equation}
		\end{definition}
		Using the $A_{1, \beta}$ condition, we get the weighted weak type $(1, 1)$ bounds for the maximal operator $M_{\beta}$.
		\begin{theorem}[\cite{Huang}]
			\label{WekTypeBoundedness}
			There exists a constant $K > 0$ such that for every $f \in L^1 \left( \mathbb{R}^n, w \ \mathrm{d}H^{\beta}_{\infty} \right)$, we have,
			\begin{equation}
				\label{Weak11}
				w_{H^{\beta}_{\infty}} \left( \left\lbrace x \in \mathbb{R}^n | M_{\beta}f \left( x \right) > t \right\rbrace \right) \leq \frac{K}{t} \| f \|_{L^1 \left( \mathbb{R}^n, w \ \mathrm{d}H^{\beta}_{\infty} \right)},
			\end{equation}
			if and only if $w$ satisfies the $A_{1, \beta}$ condition.
		\end{theorem}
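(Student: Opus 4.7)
The plan is to mimic the classical Muckenhoupt--Wheeden proof of the $A_1$--weak-type equivalence, taking care at the steps where the Choquet integral against $H^{\beta}_{\infty}$ departs from a genuine measure. For the implication $A_{1,\beta} \Rightarrow$ weak-type, I would fix $t > 0$, set $E_t = \{M_\beta f > t\}$, and for each $x \in E_t$ choose a cube $Q_x \ni x$ with $H^{\beta}_{\infty}(Q_x) < t^{-1} \int_{Q_x} |f|\,\mathrm{d}H^{\beta}_{\infty}$. I would then apply a Vitali-type lemma to extract a pairwise disjoint subfamily $\{Q_j\}$ with $E_t \subseteq \bigcup_j c Q_j$ for a dimensional constant $c$. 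Next, observe that the $A_{1,\beta}$ condition, together with the trivial lower bound $M_\beta w(x) \geq H^{\beta}_{\infty}(Q)^{-1} \int_Q w\,\mathrm{d}H^{\beta}_{\infty}$ valid for every $x \in Q$, forces
\begin{equation*}
\frac{1}{H^{\beta}_{\infty}(Q)} \int_Q w\,\mathrm{d}H^{\beta}_{\infty} \leq K\,\mathrm{essinf}_Q w
\end{equation*}
on every cube $Q$, which in turn yields a doubling estimate $\int_{cQ} w\,\mathrm{d}H^{\beta}_{\infty} \leq K' \int_Q w\,\mathrm{d}H^{\beta}_{\infty}$. Applied to each $Q_j$ and combined with the selection bound,
\begin{equation*}
\int_{Q_j} w\,\mathrm{d}H^{\beta}_{\infty} \leq \frac{K}{t}\,\mathrm{essinf}_{Q_j} w \int_{Q_j} |f|\,\mathrm{d}H^{\beta}_{\infty} \leq \frac{K}{t} \int_{Q_j} |f|w\,\mathrm{d}H^{\beta}_{\infty}.
\end{equation*}
Summing over $j$ and invoking the equivalence $w_{H^{\beta}_{\infty}}(\cdot) \asymp \int_{\cdot} w\,\mathrm{d}H^{\beta}_{\infty}$ from \cite{Huang} should produce the desired weak-type estimate.

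For the reverse implication, I would fix a cube $Q$ and any subset $E \subseteq Q$ with $H^{\beta}_{\infty}(E) > 0$, and test the weak-type inequality on $f = \chi_E$. A short computation gives $M_\beta \chi_E(x) \geq H^{\beta}_{\infty}(E)/H^{\beta}_{\infty}(Q)$ for every $x \in Q$, so $Q \subseteq \{M_\beta \chi_E > t\}$ whenever $t < H^{\beta}_{\infty}(E)/H^{\beta}_{\infty}(Q)$. The hypothesis then yields $w_{H^{\beta}_{\infty}}(Q) \leq (K/t)\,w_{H^{\beta}_{\infty}}(E)$; sending $t$ upward to the threshold gives $w_{H^{\beta}_{\infty}}(Q)/H^{\beta}_{\infty}(Q) \leq K\,w_{H^{\beta}_{\infty}}(E)/H^{\beta}_{\infty}(E)$ for every admissible $E$. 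I would then specialize $E$ to $\{x \in Q : w(x) < \mathrm{essinf}_Q w + \epsilon\}$ and let $\epsilon \downarrow 0$, producing $H^{\beta}_{\infty}(Q)^{-1} \int_Q w\,\mathrm{d}H^{\beta}_{\infty} \leq K\,\mathrm{essinf}_Q w$. Taking a supremum over cubes $Q \ni x$, and using that $\mathrm{essinf}_Q w \leq w(x)$ holds quasi-everywhere on $Q$, delivers $M_\beta w(x) \leq K w(x)$ q.e.

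The hard part will be the summation step in the sufficiency direction. Classically $\sum_j \int_{Q_j} g\,\mathrm{d}x = \int_{\bigcup Q_j} g\,\mathrm{d}x$ for disjoint $Q_j$, but the Choquet integral against $H^{\beta}_{\infty}$ is only subadditive, giving the \emph{wrong-direction} inequality $\sum_j \int_{Q_j} g\,\mathrm{d}H^{\beta}_{\infty} \geq \int_{\bigcup Q_j} g\,\mathrm{d}H^{\beta}_{\infty}$. To navigate this, I would pass to the dyadic Hausdorff content $H_\infty^{\beta,Q_0}$, whose strong subadditivity makes the associated Choquet integral sublinear, and carry out a Calder\'on--Zygmund decomposition producing a maximal pairwise disjoint family of dyadic cubes. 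Their maximality, combined with sublinearity, should control $\sum_j \int_{Q_j} |f|w\,\mathrm{d}H_\infty^{\beta,Q_0}$ by a dimensional constant multiple of $\|f\|_{L^1(\mathbb{R}^n, w\,\mathrm{d}H_\infty^{\beta,Q_0})}$; the equivalence of the dyadic and general contents noted after Definition~\ref{CubicHC} then transfers the estimate back to $M_\beta$.
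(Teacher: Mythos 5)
First, note that the paper does not prove this statement at all: it is imported verbatim from \cite{Huang}, so there is no internal proof to compare against. Judged on its own terms, your sketch follows the correct classical template and, to your credit, isolates the genuine obstruction (the summation step), but the resolution you propose for it does not work. Sublinearity of the Choquet integral with respect to $H^{\beta,Q_0}_{\infty}$ concerns sums of \emph{functions}, not sums of integrals over disjoint \emph{sets}: it yields $\int_{\bigcup_j Q_j} g\,\mathrm{d}H^{\beta,Q_0}_{\infty} \leq \sum_j \int_{Q_j} g\,\mathrm{d}H^{\beta,Q_0}_{\infty}$ and nothing in the reverse direction. The reverse inequality with a uniform constant is false even for pairwise disjoint dyadic antichains when $\beta < n$: take $g = \chi_{Q_0}$ and let $\left\lbrace Q_j \right\rbrace$ be the $2^{kn}$ dyadic subcubes of $Q_0$ of generation $k$; then
\[
\sum_j \int_{Q_j} g\,\mathrm{d}H^{\beta,Q_0}_{\infty} = 2^{kn}\cdot 2^{-k\beta} = 2^{k(n-\beta)} \longrightarrow \infty, \qquad \text{while} \qquad \int_{Q_0} g\,\mathrm{d}H^{\beta,Q_0}_{\infty} = 1.
\]
Hence ``maximal disjoint dyadic family plus sublinearity'' cannot by itself control $\sum_j \int_{Q_j}|f|w\,\mathrm{d}H^{\beta,Q_0}_{\infty}$ by a constant times $\| f \|_{L^1 \left( \mathbb{R}^n, w \ \mathrm{d}H^{\beta}_{\infty} \right)}$. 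Closing exactly this gap is the technical heart of the weak-type estimates in \cite{SpectorMaximal} and \cite{Huang}: one needs a packing-type lemma that exploits the stopping-time structure of the selected cubes (each $Q_j$ satisfies $\ell(Q_j)^\beta < t^{-1}\int_{Q_j}|f|\,\mathrm{d}H^{\beta}_{\infty}$ while its dyadic ancestors do not), not mere disjointness or maximality in the order-theoretic sense. As written, ``should control \dots by a dimensional constant'' asserts precisely the theorem's hardest step without proving it.

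Two smaller points. In the necessity direction your argument is essentially sound, but $w_{H^{\beta}_{\infty}}(E)$ is \emph{defined} in this paper as $\int_E w\,\mathrm{d}H^{\beta}_{\infty}$ (Equation \eqref{WeightedCapacity}), so no equivalence from \cite{Huang} is needed there. Also, the essential infimum must be interpreted in the $H^{\beta}_{\infty}$-quasi-everywhere sense, and the inequality $\operatorname{essinf}_Q w \leq w(x)$ for quasi-every $x \in Q$ should be justified via the capacitary Lebesgue differentiation theorem (Theorem \ref{LDT} and Corollary \ref{LDTCor}) rather than taken for granted; similarly, testing the weak-type inequality on $f = \chi_E$ requires $\chi_E$ to be an admissible (quasi-continuous) competitor, which restricts the sets $E$ you may use when approximating the sublevel sets of $w$.
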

		With the introduction of Muckenhoupt weights in the setting of Hausdorff capacities, we ask the question about extrapolation of boundedness from one $L^p$ space to all Lebesgue spaces. The classical result is due to Rubio de Francia (see \cite{RDF}). In this article, we give its analogue in the setting of Hausdorff capacities. In doing so, we prove an elementary results about construction of $A_{1, \beta}$ weights that is not included in \cite{Huang}.
		
		The article is organised as follows: in Section \ref{PreliminariesSection}, we give some prepratory results and recall certain properties about the Hausdorff content and the $A_{p, \beta}$ weights. In Section \ref{MainResultSection}, we first give a construction of the $A_{1, \beta}$ weights using the Hardy-Littlewood maximal function $M_{\beta}$. Then, we prove an extrapolation theorem for the weighted capacitary Lebesgue spaces.
	\section{Preliminaries}
		\label{PreliminariesSection}
		In this section, we discuss certain elementary results about Hausdorff content and $A_{p, \beta}$ weights. Before we begin with our results, we recall the H\"{o}lder inequality in the setting of capacities. This result can be found in \cite{Cerda} (see Theorem 2 of their article).
		\begin{theorem}[H\"{o}lder inequality]
			\label{HolderInequalityTheorem}
			Let $C$ be a strongly subadditive capacity on $\mathbb{R}^n$ (so that the Choquet integral is sublinear). Then, for functions $f, g: \mathbb{R}^n \rightarrow \mathbb{R}$ and $1 \leq p \leq \infty$, we have,
			\begin{equation}
				\label{HOlderInequality}
				\int\limits_{\mathbb{R}^n} \left| fg \right| \mathrm{d}C \leq \left( \int\limits_{\mathbb{R}^n} \left| f \right|^p \mathrm{d}C \right)^{\frac{1}{p}} \left( \int\limits_{\mathbb{R}^n} \left| g \right|^{p'} \mathrm{d}C \right)^{\frac{1}{p'}},
			\end{equation}
			where, $\frac{1}{p} + \frac{1}{p'} = 1$.
		\end{theorem}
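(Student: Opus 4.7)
The plan is to reduce the statement to the classical Young's inequality applied pointwise, and then harvest sublinearity of the Choquet integral from the strong subadditivity hypothesis. The strategy mirrors the classical $L^p$ proof, but one must be careful that the only tools available are monotonicity, positive homogeneity, and (thanks to strong subadditivity) subadditivity of the integral with respect to $C$ for non-negative integrands.

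First I would dispose of the trivial cases. If $p=1$ or $p=\infty$, the inequality follows directly from monotonicity of the Choquet integral together with the definition of the essential supremum (with respect to $C$-quasi-everywhere equality). If either $\|f\|_{L^p(\mathbb{R}^n,C)}$ or $\|g\|_{L^{p'}(\mathbb{R}^n,C)}$ is zero or infinite, the inequality is immediate. So from this point on I assume $1<p<\infty$ and that both quantities are finite and strictly positive.

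Next I would use positive homogeneity of the Choquet integral, which holds for any monotone set function via the substitution $s=t/\alpha$ in the layer-cake definition \eqref{Choquet}. This lets me normalize: replacing $f$ by $f/\|f\|_{L^p(\mathbb{R}^n,C)}$ and $g$ by $g/\|g\|_{L^{p'}(\mathbb{R}^n,C)}$, it suffices to prove $\int_{\mathbb{R}^n}|fg|\,\mathrm{d}C\le 1$ under the assumption $\|f\|_{L^p}=\|g\|_{L^{p'}}=1$. Now I apply Young's inequality pointwise,
\begin{equation*}
|f(x)g(x)| \;\le\; \frac{|f(x)|^{p}}{p}+\frac{|g(x)|^{p'}}{p'},
\end{equation*}
valid for every $x\in\mathbb{R}^n$.

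The final step is to integrate this pointwise inequality against $C$. Monotonicity of the Choquet integral is immediate from the layer-cake formula, and subadditivity on non-negative integrands is precisely the consequence of strong subadditivity invoked in the hypothesis (this is the sublinearity statement referenced in the theorem). Combining monotonicity with subadditivity and then positive homogeneity yields
\begin{equation*}
\int_{\mathbb{R}^n}|fg|\,\mathrm{d}C \;\le\; \frac{1}{p}\int_{\mathbb{R}^n}|f|^{p}\,\mathrm{d}C+\frac{1}{p'}\int_{\mathbb{R}^n}|g|^{p'}\,\mathrm{d}C \;=\; \frac{1}{p}+\frac{1}{p'} \;=\; 1,
\end{equation*}
which is the normalized form of \eqref{HOlderInequality}. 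Undoing the normalization produces the desired inequality. The only genuine subtlety is the sublinearity step: absent strong subadditivity the Choquet integral is only comonotonically additive, and even the non-negative case of subadditivity can fail; here the hypothesis makes this step legitimate, so the proof is otherwise a faithful transcription of the classical argument.
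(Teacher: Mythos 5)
The paper does not actually prove this theorem: it is imported from Cerd\`{a}--Mart\'{i}n--Silvestre (Theorem 2 of \cite{Cerda}), so there is no in-paper argument to compare yours against. Your proof is the standard one --- normalize, apply Young's inequality $|fg|\le |f|^p/p+|g|^{p'}/p'$ pointwise, and integrate --- and it is correct: the only non-classical ingredient is subadditivity of the Choquet integral on non-negative integrands, which is exactly what the strong subadditivity hypothesis supplies (and what the parenthetical ``so that the Choquet integral is sublinear'' makes explicit), while monotonicity and positive homogeneity hold for any monotone set function directly from the layer-cake formula \eqref{Choquet}. Two small points you should spell out: (i) in the degenerate case $\|f\|_{L^p(\mathbb{R}^n,C)}=0$ you need the countable subadditivity of $C$ (part of the standing definition of a capacity in this paper) to pass from $C(\{|f|>1/n\})=0$ for every $n$ to $C(\{|f|>0\})=0$, so that $|fg|=0$ quasi-everywhere and its Choquet integral vanishes; (ii) the endpoint $p=\infty$ requires a definition of the capacitary essential supremum, which the paper never provides (its spaces $L^p(\mathbb{R}^n,C)$ are defined only for $p<\infty$), so that case is a matter of fixing a convention rather than of proof.
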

		\begin{remark}
			\normalfont
			It is known that (see for instance, \cite{PonceSpector}) the dyadic Hausdorff capacity $H^{\beta, Q_0}_{\infty}$ is strongly subadditive. Therefore, the H\"{o}lder's inequality holds for $C = H^{\beta, Q_0}_{\infty}$.
		\end{remark}
		We now make the following elementary observation about the weighted $L^p$ norm of a non-negative function.
		\begin{lemma}
			\label{DualityLemma}
			Let $w: \mathbb{R}^n \rightarrow \left[ 0, \infty \right)$ be a weight function and $f: \mathbb{R}^n \rightarrow \left[ 0, \infty \right)$ be a function with $f \in L^p \left( \mathbb{R}^n, w \ \mathrm{d}H^{\beta, Q_0}_{\infty} \right)$. Then, we have,
			\begin{equation}
				\label{DualityNorm}
				\| f \|_{L^p \left( \mathbb{R}^n, w \ \mathrm{d}H^{\beta, Q_0}_{\infty} \right)} = \sup \left\lbrace \int\limits_{\mathbb{R}^n} fuw \ \mathrm{d}H^{\beta, Q_0}_{\infty} \right\rbrace,
			\end{equation}
			where, the supremum is taken over those $u \in L^{p'} \left( \mathbb{R}^n, w \ \mathrm{d}H^{\beta, Q_0}_{\infty} \right)$ with $u \geq 0$ and $\| u \|_{L^{p'} \left( \mathbb{R}^n, w \ \mathrm{d}H^{\beta, Q_0}_{\infty} \right)} = 1$.
		\end{lemma}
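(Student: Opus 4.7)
The plan is to run the standard $L^p$--duality argument carefully in the capacitary setting. The key enabling fact is that the dyadic Hausdorff capacity $H^{\beta,Q_0}_\infty$ is strongly subadditive, so the Choquet integral is sublinear and Hölder's inequality (Theorem \ref{HolderInequalityTheorem}) is at our disposal with $C = H^{\beta,Q_0}_\infty$.

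For the ``$\geq$'' direction in \eqref{DualityNorm}, I would factorise $fuw = (fw^{1/p})(uw^{1/p'})$ pointwise and apply Hölder's inequality with $\tilde f = fw^{1/p}$ and $\tilde g = uw^{1/p'}$ against the unweighted capacity $H^{\beta,Q_0}_\infty$, obtaining
\[
\int_{\mathbb{R}^n} fuw\,\mathrm{d}H^{\beta,Q_0}_\infty \leq \left(\int_{\mathbb{R}^n} f^p w\,\mathrm{d}H^{\beta,Q_0}_\infty\right)^{1/p} \left(\int_{\mathbb{R}^n} u^{p'} w\,\mathrm{d}H^{\beta,Q_0}_\infty\right)^{1/p'}.
\]
By hypothesis the second factor equals $1$, so each admissible $u$ yields an integral bounded by $\|f\|_{L^p(\mathbb{R}^n, w\,\mathrm{d}H^{\beta,Q_0}_\infty)}$; the supremum is therefore dominated by the norm.

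For the reverse inequality I would produce an explicit maximiser. We may assume $\|f\|_{L^p(\mathbb{R}^n, w\,\mathrm{d}H^{\beta,Q_0}_\infty)} > 0$ (otherwise both sides vanish). For $1 < p < \infty$, define
\[
u := \frac{f^{p-1}}{\|f\|_{L^p(\mathbb{R}^n, w\,\mathrm{d}H^{\beta,Q_0}_\infty)}^{\,p-1}},
\]
and for $p=1$ take $u \equiv 1$. Using the identity $(p-1)p' = p$, a direct computation of the Choquet integral shows $\|u\|_{L^{p'}(\mathbb{R}^n, w\,\mathrm{d}H^{\beta,Q_0}_\infty)} = 1$, and substituting this $u$ into the right-hand side of \eqref{DualityNorm} produces exactly $\|f\|_{L^p(\mathbb{R}^n, w\,\mathrm{d}H^{\beta,Q_0}_\infty)}$, finishing the equality.

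There is no genuine obstacle here; the only subtlety worth flagging is admissibility of the test function. One must verify that $f^{p-1}$ remains quasi-continuous so that $u$ truly lies in the class over which the supremum is taken, which follows from the quasi-continuity of the chosen representative of $f$ together with the continuity of $t \mapsto t^{p-1}$ on $[0,\infty)$. The rest is a literal transcription of the classical Lebesgue-space duality argument, made legitimate by Theorem \ref{HolderInequalityTheorem} and the strong subadditivity of $H^{\beta,Q_0}_\infty$.
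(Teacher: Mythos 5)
Your proposal is correct and follows essentially the same route as the paper: Hölder's inequality for the Choquet integral (with the weight split as $fw^{1/p}\cdot uw^{1/p'}$) gives the upper bound, and the explicit test function $u = f^{p-1}/\|f\|_{L^p(\mathbb{R}^n, w\,\mathrm{d}H^{\beta,Q_0}_\infty)}^{p-1}$ attains the supremum. Your added remarks on the $p=1$ case, the trivial case $\|f\|=0$, and the quasi-continuity of $f^{p-1}$ are sensible points of care that the paper passes over silently, but they do not change the argument.
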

		\begin{proof}
			First, let us observe that for any $u \in L^{p'} \left( \mathbb{R}^n, w \ \mathrm{d}H^{\beta, Q_0}_{\infty} \right)$ with $u \geq 0$ and $\| u \|_{L^{p'} \left( \mathbb{R}^n, w \ \mathrm{d}H^{\beta, Q_0}_{\infty} \right)} = 1$, we have using Equation \eqref{HOlderInequality},
			$$\int\limits_{\mathbb{R}^n} fuw \ \mathrm{d}H^{\beta, Q_0}_{\infty} \leq \| f \|_{L^p \left( \mathbb{R}^n, w \ \mathrm{d}H^{\beta, Q_0}_{\infty} \right)}.$$
			This gives us
			$$\sup \left\lbrace \int\limits_{\mathbb{R}^n} fuw \ \mathrm{d}H^{\beta, Q_0}_{\infty} \right\rbrace \leq \| f \|_{L^p \left( \mathbb{R}^n, w \ \mathrm{d}H^{\beta, Q_0}_{\infty} \right)}.$$
			Here, the supremum is taken as mentioned in the statement of the result.
			
			On the other hand, let us observe that given $f \in L^p \left( \mathbb{R}^n, w \ \mathrm{d}H^{\beta, Q_0}_{\infty} \right)$, the function $u = \left( \frac{f}{\| f \|_{L^p \left( \mathbb{R}^n, w \ \mathrm{d}H^{\beta, Q_0}_{\infty} \right)}} \right)^{p - 1}$ is such that $u \geq 0$, $u \in L^{p'} \left( \mathbb{R}^n, w \ \mathrm{d}H^{\beta, Q_0}_{\infty} \right)$ and $\| u \|_{L^{p'} \left( \mathbb{R}^n, w \ \mathrm{d}H^{\beta, Q_0}_{\infty} \right)} = 1$. Further, it is easy to see that
			$$\int\limits_{\mathbb{R}^n} fuw \ \mathrm{d}H^{\beta, Q_0}_{\infty} = \| f \|_{L^p \left( \mathbb{R}^n, w \ \mathrm{d}H^{\beta, Q_0}_{\infty} \right)}.$$
			This completes the proof!
		\end{proof}
		\begin{remark}
			\normalfont
			While Lemma \ref{DualityLemma} mimics the duality of $L^p$ spaces in the measure theoretic setting, the proof we have presented only works for non-negative functions. Indeed, the theory of Choquet integrals for general (singed) functions is much more intricate and would require an independent study. Since we deal with maximal operators and norm estimates, the form of duality given for non-negative functions is sufficient to our needs.
		\end{remark}
		An important consequence of the boundedness of the maximal operator is that one gets a Lebesgue differentiation theorem in the setting of Hausdorff capacities. The same was proved in \cite{Basak} (see Theorem c in their article). We state it here for our use.
		\begin{theorem}[Lebesgue Differentiation Theorem]
			\label{LDT}
			For $f \in L^1 \left( \mathbb{R}^n, H^{\beta, Q_0}_{\infty} \right)$, we have,
			\begin{equation}
				\label{LDTEquation}
				\lim\limits_{Q \rightarrow x} \frac{1}{H^{\beta, Q_0}_{\infty} \left( Q \right)} \int\limits_{Q} \left| f \left( y \right) - f \left( x \right) \right| \mathrm{d}H^{\beta, Q_0}_{\infty} \left( y \right) = 0,
			\end{equation}
			where the limit is taken over a dyadic tower of cubes shrinking to $x$.
		\end{theorem}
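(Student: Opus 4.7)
The plan is to follow the classical Lebesgue differentiation argument, adapted to the capacitary setting. Define the oscillation operator
\[
\Omega f(x) := \limsup_{Q \to x} \frac{1}{H^{\beta, Q_0}_{\infty}(Q)} \int_Q |f(y) - f(x)| \, \mathrm{d}H^{\beta, Q_0}_{\infty}(y),
\]
where the limsup is taken over a dyadic tower shrinking to $x$. Since the integrand is non-negative, the goal reduces to showing $\Omega f(x) = 0$ for $H^{\beta, Q_0}_{\infty}$-quasi-every $x$.

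The first step is the pointwise bound $\Omega f(x) \leq M_{\beta} f(x) + |f(x)|$, obtained by splitting $|f(y) - f(x)| \leq |f(y)| + |f(x)|$ inside the average and using that $|f(x)|$ is a constant in $y$; this split is legitimate because the Choquet integral against $H^{\beta, Q_0}_{\infty}$ is sublinear (the dyadic Hausdorff content being strongly subadditive). Secondly, I would verify $\Omega g \equiv 0$ for every bounded continuous, compactly supported $g$: uniform continuity makes $|g(y) - g(x)|$ uniformly small on small cubes about $x$, and the average then collapses directly.

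Given these two facts, a standard density argument finishes the proof. Fix $\epsilon > 0$ and $\delta > 0$; choose a continuous compactly supported $g$ with $\|f - g\|_{L^1(\mathbb{R}^n, H^{\beta, Q_0}_{\infty})} < \delta$. Using sublinearity of $\Omega$ (again from strong subadditivity of $H^{\beta,Q_0}_{\infty}$) together with $\Omega g \equiv 0$,
\[
\Omega f \leq \Omega(f - g) \leq M_{\beta}(f - g) + |f - g|.
\]
Hence $\{\Omega f > \epsilon\} \subseteq \{M_{\beta}(f-g) > \epsilon/2\} \cup \{|f-g| > \epsilon/2\}$. The capacity of the first set is bounded by $2K\delta/\epsilon$ via Theorem~\ref{WekTypeBoundedness} applied to $w \equiv 1$ (which is an $A_{1,\beta}$ weight since $M_{\beta} 1 \equiv 1$), and the capacity of the second is bounded by $2\delta/\epsilon$ by a Chebyshev-type inequality against $H^{\beta,Q_0}_{\infty}$. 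Letting $\delta \to 0$ shows $\{\Omega f > \epsilon\}$ has zero capacity, and unioning over a countable sequence $\epsilon_k \to 0$ concludes.

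The main obstacle I expect is the density step: establishing that continuous compactly supported functions are dense in $L^1(\mathbb{R}^n, H^{\beta, Q_0}_{\infty})$ within the quasi-continuous equivalence-class framework. In the classical measure-theoretic setting this is immediate, but for Choquet integrals one must verify that the standard approximations (truncation, mollification, restriction to compact support) stay inside the quasi-continuous class and converge in the capacitary $L^1$ seminorm. The strong subadditivity of the dyadic Hausdorff content places us in the Banach regime where existing capacitary density results (cf. \cite{PonceSpector}) apply, and this is the technical point where I would invest the most care.
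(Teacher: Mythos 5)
The paper does not prove this theorem at all: it is imported verbatim from the reference \cite{Basak} (``Theorem c'' there), so there is no in-paper argument to compare yours against. Taken on its own terms, your outline is the standard Lebesgue-differentiation scheme and the individual steps check out in the capacitary setting: the splitting $\Omega f \leq M_{\beta}f + |f|$ and the sublinearity $\Omega f \leq \Omega(f-g) + \Omega g$ both rest only on subadditivity of the Choquet integral, which holds for $H^{\beta,Q_0}_{\infty}$ because the dyadic content is strongly subadditive; $w \equiv 1$ is indeed an $A_{1,\beta}$ weight since $\int_Q 1\,\mathrm{d}H^{\beta}_{\infty} = H^{\beta}_{\infty}(Q)$ gives $M_{\beta}1 \equiv 1$, so the unweighted weak $(1,1)$ bound is available (it is also the standing hypothesis the paper itself uses for $M_{\beta}$ in Lemma \ref{PreparatoryLemmaConstructionA1}); and the Chebyshev bound $H^{\beta,Q_0}_{\infty}(\{|f-g|>t\}) \leq t^{-1}\int |f-g|\,\mathrm{d}H^{\beta,Q_0}_{\infty}$ follows directly from the layer-cake definition \eqref{Choquet} plus monotonicity. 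You have correctly identified the one genuine gap: density of continuous compactly supported functions in $L^1(\mathbb{R}^n, H^{\beta,Q_0}_{\infty})$ is not automatic and is precisely where the quasi-continuity framework earns its keep; without citing or proving that approximation result (it is available in the literature on capacitary integrals, cf. \cite{PonceSpector}), the argument is a template rather than a proof. Two smaller points worth making explicit if you write this up: the conclusion you actually obtain is that the limit holds for $H^{\beta,Q_0}_{\infty}$-quasi-every $x$ (which is what the equivalence-class formulation of the statement intends, though the statement does not say so), and the final union over $\epsilon_k \to 0$ uses countable subadditivity of the content, which holds by definition.
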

		\begin{remark}
			\normalfont
			It is clear that the Lebesgue Differentiation Theorem holds for $f \in L^1_{\text{loc}} \left( H^{\beta, Q_0}_{\infty} \right)$, since we can replace $f$ by $f \cdot \chi_{\tilde{Q}}$ for some (large) dyadic cube $\tilde{Q}$ so that $f \cdot \chi_{\tilde{Q}} \in L^1 \left( \mathbb{R}^n, H^{\beta, Q_0}_{\infty} \right)$.
		\end{remark}
		The following consequence of the Lebesgue differentiation theorem is easy to observe.
		\begin{corollary}
			\label{LDTCor}
			There exists a constant $K > 0$ such that for any $f \in L^1_{\text{loc}} \left( H^{\beta}_{\infty} \right)$, we have for any $x \in \mathbb{R}^n$, $\left| f \left( x \right) \right| \leq K \ M_{\beta}f \left( x \right)$.
		\end{corollary}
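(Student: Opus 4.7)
The plan is to combine the Lebesgue differentiation theorem (Theorem \ref{LDT}) with the sublinearity of the Choquet integral with respect to the strongly subadditive capacity $H^{\beta,Q_0}_\infty$. Since elements of $L^1_{\mathrm{loc}}(H^{\beta}_\infty)$ are equivalence classes of quasi-continuous functions modulo q.e.\ equality, I fix a quasi-continuous representative of $f$; the asserted pointwise inequality is then to be interpreted as holding outside a set of $H^{\beta}_\infty$-capacity zero.

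By the remark following Theorem \ref{LDT}, after replacing $f$ by $f \cdot \chi_{\tilde Q}$ for a sufficiently large dyadic cube $\tilde Q$ containing $x$, the Lebesgue differentiation theorem applies, so that for q.e.\ $x$ the averages $\frac{1}{H^{\beta,Q_0}_\infty(Q)} \int_Q |f(y)-f(x)|\,\mathrm{d}H^{\beta,Q_0}_\infty(y)$ tend to zero as $Q$ shrinks to $x$ along a dyadic tower. Starting from the pointwise inequality $|f(x)| \le |f(y)| + |f(y)-f(x)|$ and integrating over such a cube $Q$, the sublinearity of the Choquet integral together with the identity $\int_Q |f(x)|\,\mathrm{d}H^{\beta,Q_0}_\infty(y) = |f(x)|\,H^{\beta,Q_0}_\infty(Q)$ yields
\begin{equation*}
|f(x)| \le \frac{1}{H^{\beta,Q_0}_\infty(Q)} \int_Q |f(y)|\,\mathrm{d}H^{\beta,Q_0}_\infty(y) + \frac{1}{H^{\beta,Q_0}_\infty(Q)} \int_Q |f(y)-f(x)|\,\mathrm{d}H^{\beta,Q_0}_\infty(y).
\end{equation*}
Passing to the limit along the dyadic tower makes the second term vanish, leaving $|f(x)|$ bounded by the liminf of the first term.

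What remains is to relate the dyadic average to $M_\beta f(x)$. This is where the equivalence of $H^{\beta,Q_0}_\infty$ and $H^{\beta}_\infty$ recorded in the introduction enters: for any cube $Q$, both the capacity of $Q$ and the Choquet integral $\int_Q |f|$ transform by universal multiplicative constants upon replacing one content by the other, so the dyadic average is controlled by a constant multiple of $\frac{1}{H^{\beta}_\infty(Q)}\int_Q |f|\,\mathrm{d}H^{\beta}_\infty$, which is in turn at most $M_\beta f(x)$ since each dyadic cube containing $x$ is one of the cubes appearing in the supremum defining $M_\beta$. The resulting $K$ depends only on $n$ and $\beta$ through the equivalence constants. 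The main (mild) obstacle is simply bookkeeping this interplay between the two equivalent contents, and acknowledging that ``any $x$'' in the statement really means q.e.\ $x$; beyond that the argument is a direct transcription of the classical proof.
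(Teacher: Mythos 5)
Your argument is correct and follows essentially the same route as the paper's own proof: the triangle inequality under the Choquet integral over a dyadic cube, the equivalence of $H^{\beta,Q_0}_{\infty}$ and $H^{\beta}_{\infty}$ to bound the average of $|f|$ by $K\,M_{\beta}f(x)$, and Theorem \ref{LDT} to kill the error term in the limit along the dyadic tower. Your explicit caveat that the conclusion holds only for $H^{\beta}_{\infty}$-quasi-every $x$ (namely, wherever the Lebesgue differentiation limit exists) is a fair reading that the paper's statement leaves implicit.
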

		\begin{proof}
			First, we notice that for any dyadic cube $Q$ containing $x$,
			\begin{align*}
				\left| f \left( x \right) \right| &= \frac{1}{H^{\beta, Q_0}_{\infty} \left( Q \right)} \int\limits_{Q} \left| f \left( x \right) \right| \mathrm{d}H^{\beta, Q_0}_{\infty} \\
				&\leq \frac{1}{H^{\beta, Q_0}_{\infty} \left( Q \right)} \int\limits_{Q} \left| f \left( y \right) - f \left( x \right) \right| \mathrm{d}H^{\beta, Q_0}_{\infty} \left( y \right) + \frac{1}{H^{\beta, Q_0}_{\infty} \left( Q \right)} \int\limits_{Q} \left| f \left( y \right) \right| \mathrm{d}H^{\beta, Q_0}_{\infty} \left( y \right).
			\end{align*}
			Due to equivalence of $H^{\beta, Q_0}_{\infty}$ and $H^{\beta}_{\infty}$, we get
			$$\left| f \left( x \right) \right| \leq \frac{1}{H^{\beta, Q_0}_{\infty} \left( Q \right)} \int\limits_{Q} \left| f \left( y \right) - f \left( x \right) \right| \mathrm{d}H^{\beta, Q_0}_{\infty} \left( y \right) + K \ M_{\beta}f \left( x \right),$$
			for some constant $K > 0$ that depends only on $n$ and $\beta$. The result now follows from Equation \eqref{LDTEquation} by taking limits $Q \rightarrow x$ above.
		\end{proof}
		We also require the following property of $A_{p, \beta}$ weights in the sequel.
		\begin{proposition}
			\label{ApApPrimeCharacterization}
			For $p > 1$, a weight function $w \in A_{p, \beta}$ if and only if $w^{1 - p'} \in A_{p', \beta}$.
		\end{proposition}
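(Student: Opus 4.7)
The plan is a direct algebraic manipulation of the $A_{p,\beta}$ condition in Equation \eqref{ApCondition}, exploiting the fact that the map $w \mapsto w^{1-p'}$ is an involution relating the classes $A_{p,\beta}$ and $A_{p',\beta}$. First I would record the elementary identities $-1/(p-1) = 1-p'$, $p'-1 = 1/(p-1)$, and $(1-p')(1-p) = 1$, all of which follow immediately from $p' = p/(p-1)$. In particular, the exponent $-1/(p-1)$ appearing in Equation \eqref{ApCondition} can equivalently be written as $1-p'$, which puts the condition in the form most convenient for the argument.

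For the forward direction, assuming $w \in A_{p,\beta}$, the condition reads
\begin{equation*}
\left( \frac{1}{H^{\beta}_{\infty}(Q)} \int_Q w \, \mathrm{d}H^{\beta}_{\infty} \right) \left( \frac{1}{H^{\beta}_{\infty}(Q)} \int_Q w^{1-p'} \, \mathrm{d}H^{\beta}_{\infty} \right)^{p-1} \leq A
\end{equation*}
for every cube $Q$. I would then raise both sides to the power $1/(p-1) = p'-1$ to obtain
\begin{equation*}
\left( \frac{1}{H^{\beta}_{\infty}(Q)} \int_Q w^{1-p'} \, \mathrm{d}H^{\beta}_{\infty} \right) \left( \frac{1}{H^{\beta}_{\infty}(Q)} \int_Q w \, \mathrm{d}H^{\beta}_{\infty} \right)^{p'-1} \leq A^{1/(p-1)}.
\end{equation*}
Setting $\sigma := w^{1-p'}$ and using $(1-p')(1-p)=1$ to rewrite $w = \sigma^{1-p}$, the displayed inequality is exactly the $A_{p',\beta}$ condition for $\sigma$, with constant $A^{1/(p-1)}$.

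For the converse, there is nothing new to do: I would simply apply the forward direction to the pair $(p', \sigma)$ in place of $(p, w)$. Since $(p')' = p$ and $\sigma^{1-p} = w$, the assumption $w^{1-p'} \in A_{p',\beta}$ immediately yields $w \in A_{p,\beta}$. The entire argument is algebraic bookkeeping and uses no property of the capacity beyond what is already encoded in Equation \eqref{ApCondition}, so I do not anticipate any genuine obstacle; the only care needed is in correctly tracking the interplay between the four exponents $p$, $p'$, $1-p'$, and $1/(p-1)$.
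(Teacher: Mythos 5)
Your proposal is correct and follows essentially the same route as the paper: both proofs are pure algebraic manipulations of the condition in Equation \eqref{ApCondition}, raising the $A_{p',\beta}$ inequality for $w^{1-p'}$ to the power $p-1$ and using $(1-p')(1-p)=1$ to recover the $A_{p,\beta}$ inequality for $w$ (and conversely). The only cosmetic difference is that you phrase one direction and invoke the involution symmetry for the other, whereas the paper writes the chain of equivalences directly.
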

		\begin{proof}
			For $p > 1$, we notice that $\omega^{1-p'} \in A_{p', \beta}$ is equivalent to
			\begin{equation*}
				\left( \frac{1}{H^{\beta}_{\infty} \left( Q \right)} \int\limits_{Q} w^{1 - p'} \mathrm{d}H^{\beta}_{\infty} \right)
				\left( \frac{1}{H_\infty^{\beta}(Q)} \int_Q \omega^{(1-p')(1-p)} \,\mathrm{d}H_\infty^{\beta} \right)^{p'-1} \leq A,
			\end{equation*}
			for every cube $Q \subseteq \mathbb{R}^n$. 
			By taking power $(p-1)$ and simplifying, we get,
			\begin{equation*}
				\left( \frac{1}{H^{\beta}_{\infty} \left( Q \right)} \int\limits_{Q} w^{1 - p'} \mathrm{d}H^{\beta}_{\infty} \right)^{p-1}
				\left( \frac{1}{H_\infty^{\beta}(Q)} \int_Q \omega \,\mathrm{d}H_\infty^{\beta} \right) \leq A^{p - 1}.
			\end{equation*}
			However, this equivalent to $\omega\in A_{p, \beta}.$
		\end{proof}
		Before we close this section, we recall two results from \cite{Huang} that are crucial to the proof of the extrapolation theorem. These results are a consequence of the reverse H\"{o}lder inequality for the capacitary Muckenhoupt weights.
		\begin{theorem}[\cite{Huang}]
			\label{ImprovingPower}
			Let $p \geq 1$ and $w \in A_{p, \beta}$. If $w$ is quasi-continuous, then there exists a positive constant $\gamma$ such that $w^{1 + \gamma} \in A_{p, \beta}$.
		\end{theorem}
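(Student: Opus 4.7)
The plan is to reduce the self-improvement property to the reverse Hölder inequality for capacitary $A_{p,\beta}$ weights, which the excerpt identifies as the underlying tool from \cite{Huang}. First I treat the case $p > 1$. Since $w \in A_{p,\beta}$ is quasi-continuous, the reverse Hölder inequality supplies some $\varepsilon_1 > 0$ and $C_1 > 0$ such that
$$\frac{1}{H^\beta_\infty(Q)}\int_Q w^{1+\varepsilon_1}\,\mathrm{d}H^\beta_\infty \leq C_1 \left(\frac{1}{H^\beta_\infty(Q)}\int_Q w\,\mathrm{d}H^\beta_\infty\right)^{1+\varepsilon_1}$$
for every cube $Q$. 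By Proposition \ref{ApApPrimeCharacterization}, the dual weight $\sigma := w^{1-p'}$ lies in $A_{p',\beta}$; assuming it is also quasi-continuous, applying the reverse Hölder inequality to $\sigma$ yields some $\varepsilon_2 > 0$ and $C_2 > 0$ with the analogous bound. I then set $\gamma := \min\{\varepsilon_1,\varepsilon_2\} > 0$; by the capacitary Hölder inequality (Theorem \ref{HolderInequalityTheorem}), the two reverse Hölder estimates remain valid (with adjusted constants) for this possibly smaller exponent.

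To verify the $A_{p,\beta}$ condition for $w^{1+\gamma}$, I must bound the product
$$\left(\frac{1}{H^\beta_\infty(Q)}\int_Q w^{1+\gamma}\,\mathrm{d}H^\beta_\infty\right)\left(\frac{1}{H^\beta_\infty(Q)}\int_Q w^{-(1+\gamma)/(p-1)}\,\mathrm{d}H^\beta_\infty\right)^{p-1}.$$
The key identity $-(1+\gamma)/(p-1) = (1-p')(1+\gamma)$ rewrites the second average as one of $\sigma^{1+\gamma}$. Applying the reverse Hölder estimate to $w$ (for the first factor) and to $\sigma$ followed by raising to the $(p-1)$ power (for the second factor), the product is majorised by $C_1 C_2^{p-1}$ times
$$\left[\left(\frac{1}{H^\beta_\infty(Q)}\int_Q w\,\mathrm{d}H^\beta_\infty\right)\left(\frac{1}{H^\beta_\infty(Q)}\int_Q w^{1-p'}\,\mathrm{d}H^\beta_\infty\right)^{p-1}\right]^{1+\gamma},$$
which is at most $A^{1+\gamma}$ by the $A_{p,\beta}$ condition for $w$. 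Hence $w^{1+\gamma} \in A_{p,\beta}$ with constant $C_1 C_2^{p-1} A^{1+\gamma}$.

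For $p = 1$, the dual route is unavailable, so I combine the reverse Hölder inequality for $w$ with the defining pointwise bound $M_\beta w(x) \leq K w(x)$ holding $H^\beta_\infty$-quasi-everywhere. For any cube $Q \ni x$, reverse Hölder gives
$$\frac{1}{H^\beta_\infty(Q)}\int_Q w^{1+\gamma}\,\mathrm{d}H^\beta_\infty \leq C_1 \left(\frac{1}{H^\beta_\infty(Q)}\int_Q w\,\mathrm{d}H^\beta_\infty\right)^{1+\gamma} \leq C_1 \bigl(M_\beta w(x)\bigr)^{1+\gamma} \leq C_1 K^{1+\gamma} w(x)^{1+\gamma},$$
and taking the supremum over such $Q$ yields $M_\beta(w^{1+\gamma})(x) \leq C_1 K^{1+\gamma} w^{1+\gamma}(x)$ q.e., which is precisely the $A_{1,\beta}$ condition. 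The main obstacle I anticipate is verifying quasi-continuity of $w^{1-p'}$ so that the reverse Hölder inequality of \cite{Huang} is literally applicable to the dual weight; this should follow from quasi-continuity of $w$ combined with strict positivity $H^\beta_\infty$-q.e.~(a consequence of the $A_{p,\beta}$ integrability condition on $w^{1-p'}$), but it requires a short separate verification that is not spelled out in the excerpt.
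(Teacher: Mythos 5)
The paper offers no proof of this statement---it is imported from \cite{Huang} with only the remark that it is a consequence of the reverse H\"{o}lder inequality for capacitary Muckenhoupt weights---and your argument is precisely that standard deduction, carried out correctly both for $p>1$ (reverse H\"{o}lder applied to $w$ and to the dual weight $w^{1-p'}$, which lies in $A_{p',\beta}$ by Proposition \ref{ApApPrimeCharacterization}, combined via the identity $-(1+\gamma)/(p-1)=(1-p')(1+\gamma)$) and for $p=1$ (reverse H\"{o}lder plus the pointwise bound $M_{\beta}w\leq Kw$ quasi-everywhere). The only loose end is the one you flag yourself, namely the quasi-continuity of $w^{1-p'}$ needed to invoke the reverse H\"{o}lder inequality for the dual weight; since the paper delegates the entire proof to \cite{Huang}, that verification belongs to the source's framework, and your proposal matches the route the paper indicates.
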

		\begin{theorem}[\cite{Huang}]
			\label{SelfImproving}
			Let $1 < p < \infty$ and $w \in A_{p, \beta}$ be a quasi-continuous weight. Then, there is a $q$ with $1 < q < p$ such that $w \in A_{q, \beta}$.
		\end{theorem}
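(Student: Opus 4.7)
The plan is to deduce the self-improvement from Proposition \ref{ApApPrimeCharacterization} together with the power-improvement of Theorem \ref{ImprovingPower}, by first passing to the dual weight. Set $\sigma := w^{1-p'}$; by Proposition \ref{ApApPrimeCharacterization}, $\sigma \in A_{p', \beta}$, and since $w$ is quasi-continuous and strictly positive quasi-everywhere, so is $\sigma$. Applying Theorem \ref{ImprovingPower} to the quasi-continuous $A_{p', \beta}$ weight $\sigma$ then yields a $\gamma > 0$ such that $\sigma^{1+\gamma} \in A_{p', \beta}$.

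Next, I would define $q'$ by the relation $1 - q' = (1 - p')(1 + \gamma)$, so that $q' = 1 + (p' - 1)(1 + \gamma) > p'$, and the conjugate exponent $q = 1 + (p - 1)/(1 + \gamma)$ automatically lies in $(1, p)$. A direct computation then gives
$$w^{1 - q'} = w^{(1 - p')(1 + \gamma)} = \sigma^{1 + \gamma} \in A_{p', \beta}.$$
To conclude $w \in A_{q, \beta}$ via Proposition \ref{ApApPrimeCharacterization}, it suffices to upgrade the above containment to $w^{1 - q'} \in A_{q', \beta}$. For this I would establish the nesting $A_{p', \beta} \subseteq A_{q', \beta}$ whenever $p' < q'$: for any weight $\tau \in A_{p', \beta}$, a Jensen-type application of H\"older's inequality (Theorem \ref{HolderInequalityTheorem}) with exponent $(q' - 1)/(p' - 1) > 1$ yields
$$\frac{1}{H^{\beta}_{\infty} \left( Q \right)} \int\limits_{Q} \tau^{-1/(q' - 1)} \mathrm{d}H^{\beta}_{\infty} \leq \left( \frac{1}{H^{\beta}_{\infty} \left( Q \right)} \int\limits_{Q} \tau^{-1/(p' - 1)} \mathrm{d}H^{\beta}_{\infty} \right)^{(p' - 1)/(q' - 1)},$$
and raising both sides to the power $q' - 1$ matches exponents so that the $A_{q', \beta}$ constant of $\tau$ is controlled by its $A_{p', \beta}$ constant.

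The main point to be careful about is the preservation of quasi-continuity when taking powers of $w$, which is essential in order to invoke Theorem \ref{ImprovingPower}. One has to verify that $w$ is positive quasi-everywhere (otherwise the dual weight $w^{1 - p'}$ would blow up on a set of positive capacity, in conflict with the $A_{p, \beta}$ condition) and that quasi-continuity is preserved by composition with continuous maps on $(0, \infty)$. Once these technical points are in place, the remainder of the argument is purely algebraic bookkeeping involving the relations between $p$, $p'$, $q$, $q'$, and $\gamma$.
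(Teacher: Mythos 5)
The paper does not actually prove this statement: it is imported verbatim from \cite{Huang} as a black box (a consequence, there, of a reverse H\"{o}lder inequality), so there is no internal proof to compare against. Your derivation is a legitimate, essentially self-contained alternative that assembles the result from the other quoted ingredients: it is the classical ``open-endedness via the dual weight'' argument, and the exponent bookkeeping checks out ($1-q'=(1-p')(1+\gamma)$ gives $w^{1-q'}=\sigma^{1+\gamma}\in A_{p',\beta}$, the nesting $A_{p',\beta}\subseteq A_{q',\beta}$ follows from the capacitary H\"{o}lder inequality exactly as you indicate since $\frac{1}{H^{\beta}_{\infty}(Q)}\int_{Q}1\,\mathrm{d}H^{\beta}_{\infty}=1$, and Proposition \ref{ApApPrimeCharacterization} applied at exponent $q$ closes the loop). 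What this buys is independence from the unproved citation, at the cost of leaning on Theorem \ref{ImprovingPower}, which is itself imported from \cite{Huang} and rests on the same reverse H\"{o}lder machinery --- so you have traded one black box for another rather than eliminated it. The one step you flag but do not discharge is the only real obligation: Theorem \ref{ImprovingPower} requires its input to be quasi-continuous, so you must show $\sigma=w^{1-p'}$ is quasi-continuous. Positivity quasi-everywhere does follow from the $A_{p,\beta}$ condition (if $w=0$ on a set of positive content inside $Q$, the Choquet integral $\int_{Q}w^{-1/(p-1)}\,\mathrm{d}H^{\beta}_{\infty}$ is infinite by definition \eqref{Choquet}), and the exceptional null set can be enclosed in an open set of arbitrarily small content directly from the covering definition \eqref{Hausdorff}, after which continuity of $t\mapsto t^{1-p'}$ on $(0,\infty)$ preserves quasi-continuity on the closed complement. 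With that paragraph added, the argument is complete.
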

	\section{The Extrapolation Theorem}
		\label{MainResultSection}
		In this section, we prove an extrapolation theorem using the Muckenhoupt $A_{p, \beta}$ weights in the setting of Hausdorff capacities. The techniques we use here are analogous to those found in \cite{DuoandikoetxeaBook}. 
		We first give a method to construct $A_{1, \beta}$ weights using the maximal operator $M_{\beta}$. In order to do so, we require the following result.
		\begin{lemma}
			\label{PreparatoryLemmaConstructionA1}
			Let $T$ be an operator acting on functions on $\mathbb{R}^n$ such that there is a constant $K > 0$ such that for every $f \in L^1 \left( \mathbb{R}^n, H_{\infty}^{\beta} \right)$, we have,
			\begin{equation}
				\label{Weak11T}
				H_{\infty}^{\beta} \left( \left\lbrace x \in \mathbb{R}^n | \left| Tf \left( x \right) \right| > t \right\rbrace \right) \leq \frac{K}{t} \| f \|_{L^1 \left( \mathbb{R}^n, H^{\beta}_{\infty} \right)}.
			\end{equation}
			Then, For every $0 < \gamma < 1$, there is a constant $K' > 0$ depending only on $\gamma$ such that for any set $E \subseteq \mathbb{R}^n$ with $H^{\beta}_{\infty} \left( E \right) < + \infty$, we have,
			\begin{equation}
				\label{PreparatoryEquation}
				\int\limits_{E} \left| Tf \right|^{\gamma} \mathrm{d}H^{\beta}_{\infty} \leq K' \left( H^{\beta}_{\infty} \left( E \right) \right)^{1 - \gamma} \| f \|_{L^1 \left( \mathbb{R}^n, H^{\beta}_{\infty} \right)}^{\gamma}.
			\end{equation}
		\end{lemma}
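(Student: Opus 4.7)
The plan is the classical Kolmogorov truncation-and-optimization argument, transferred verbatim to the Choquet setting. The weak-type hypothesis \eqref{Weak11T} gives $L^1 \to L^{1,\infty}$ control for $T$, and we want to upgrade it to an $L^\gamma$-type estimate on any set of finite Hausdorff content. What makes the classical proof go through is that the layer-cake formula is built directly into Definition \ref{ChoquetIntegralDefinition}, so we never need additivity of $H^\beta_\infty$, only monotonicity.

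First, I would rewrite the left-hand side of \eqref{PreparatoryEquation} by applying Definition \ref{ChoquetIntegralDefinition} to $|Tf|^\gamma \chi_E$ and then performing the substitution $t = s^\gamma$ to obtain
\[
\int_E |Tf|^\gamma \, \mathrm{d}H^\beta_\infty \;=\; \gamma \int_0^\infty s^{\gamma - 1} \, H^\beta_\infty\!\bigl(\{x \in E : |Tf(x)| > s\}\bigr)\, \mathrm{d}s.
\]
Next, I would split this $s$-integral at a threshold $s_0 > 0$ to be chosen. On $(0, s_0]$, bound the integrand by $H^\beta_\infty(E)$ using monotonicity of $H^\beta_\infty$, giving a contribution of at most $s_0^\gamma H^\beta_\infty(E)$; this is where finiteness of $H^\beta_\infty(E)$ is used. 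On $(s_0, \infty)$, again use monotonicity to pass to the global super-level set of $|Tf|$ and then apply \eqref{Weak11T} to bound the integrand by $K \|f\|_{L^1(\mathbb{R}^n, H^\beta_\infty)}/s$. The tail integral of $s^{\gamma-2}$ converges precisely because $\gamma < 1$, producing a contribution of $\frac{\gamma K}{1-\gamma} \|f\|_{L^1(\mathbb{R}^n, H^\beta_\infty)} s_0^{\gamma - 1}$.

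Finally I would optimize in $s_0$: balancing the two terms leads naturally to the choice $s_0 = K \|f\|_{L^1(\mathbb{R}^n, H^\beta_\infty)}/H^\beta_\infty(E)$, and substituting back yields \eqref{PreparatoryEquation} with the explicit constant $K' = K^\gamma/(1 - \gamma)$. The degenerate cases ($\|f\|_{L^1} = 0$, $H^\beta_\infty(E) = 0$, or $\|f\|_{L^1} = \infty$) are immediate. I expect essentially no obstacle in this argument; the only point that deserves a sanity check is that every step uses only the two properties of $H^\beta_\infty$ that are always available, namely monotonicity and the layer-cake definition of the Choquet integral, and that no subadditivity of $H^\beta_\infty$ is silently invoked. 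The hypothesis $\gamma < 1$ enters only once, in guaranteeing the convergence of the high-threshold piece.
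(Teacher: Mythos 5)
Your proposal is correct and coincides with the paper's own argument: both use the layer-cake form of the Choquet integral, bound the distribution function by $\min\{H^{\beta}_{\infty}(E),\, K\|f\|_{L^1}/t\}$, split the $t$-integral at $K\|f\|_{L^1(\mathbb{R}^n, H^{\beta}_{\infty})}/H^{\beta}_{\infty}(E)$, and arrive at the same constant $K' = K^{\gamma}/(1-\gamma)$. The only cosmetic difference is that you present the split point as the result of optimizing over a free threshold $s_0$, whereas the paper fixes it directly.
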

		\begin{proof}
			Using the definition of Choquet integral (see Equation \eqref{Choquet}) and changing variables $t$ to $t^{\gamma}$, we easily see that
			$$\int\limits_{E} \left| Tf \right|^{\gamma} \mathrm{d}H^{\beta}_{\infty} = \gamma \int\limits_{0}^{\infty} t^{\gamma - 1} H^{\beta}_{\infty} \left( \left\lbrace x \in E | \left| Tf \left( x \right) \right| > t \right\rbrace \right) \mathrm{d}t.$$
			Due to Equation \eqref{Weak11T} and the Monotonicity of the Hausdorff content, we see that
			$$H^{\beta}_{\infty} \left( \left\lbrace x \in E | \left| Tf \left( x \right) \right| > t \right\rbrace \right) \leq \min \left\lbrace H^{\beta}_{\infty} \left( E \right), \frac{K}{t} \| f \|_{L^1 \left( \mathbb{R}^n, H^{\beta}_{\infty} \right)} \right\rbrace.$$
			Thus, we get
			\begin{align*}
				\int\limits_{E} \left| Tf \right|^{\gamma} \mathrm{d}H^{\beta}_{\infty} &\leq \gamma \left[ \int\limits_{0}^{\frac{K \| f \|_{L^1 \left( \mathbb{R}^n, H^{\beta}_{\infty} \right)}}{H^{\beta}_{\infty} \left( E \right)}} t^{\gamma - 1} H^{\beta}_{\infty} \left( E \right) \mathrm{d}t \right. \\
				&\left. + \int\limits_{\frac{K \| f \|_{L^1 \left( \mathbb{R}^n, H^{\beta}_{\infty} \right)}}{H^{\beta}_{\infty} \left( E \right)}}^{\infty} t^{\gamma - 2} K \| f \|_{L^1 \left( \mathbb{R}^n, H^{\beta}_{\infty} \right)} \mathrm{d}t \right].
			\end{align*}
			Upon simplifying, we obtain Equation \eqref{PreparatoryEquation} with $K' = \frac{K^{\gamma}}{1 - \gamma}$.
		\end{proof}
		\begin{theorem}
			\label{ConstructionA1Weights}
			Let $f \in L^1_{\text{loc}} \left( H^{\beta}_{\infty} \right)$ such that $M_{\beta}f \left( x \right) < + \infty$ for $H^{\beta}_{\infty}$-quasi-every $x \in \mathbb{R}^n$. Then, for every $\delta \in \left[ 0, 1 \right)$, the function $\left( M_{\beta}f \right)^{\delta}$ is an $A_{1, \beta}$ weight.
		\end{theorem}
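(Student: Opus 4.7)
The plan is to follow the classical Coifman--Rochberg argument, adapted to the capacitary framework via the preparatory lemma already established. Fix $x \in \mathbb{R}^n$ and a cube $Q \ni x$; the goal is to bound $\frac{1}{H^\beta_\infty(Q)}\int_Q (M_\beta f)^\delta \, \mathrm{d}H^\beta_\infty$ by $K (M_\beta f(x))^\delta$, with a constant independent of $Q$ and $x$; taking the supremum over $Q \ni x$ then yields the $A_{1,\beta}$ condition from Definition \ref{A1ConditionDefinition}. The case $\delta = 0$ is trivial (constant weights are in $A_{1,\beta}$), so I assume $0 < \delta < 1$.

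The key step is the decomposition $f = f_1 + f_2$, where $f_1 = f \cdot \chi_{2Q}$ and $f_2 = f \cdot \chi_{(2Q)^c}$. Since $M_\beta$ is sublinear and $t \mapsto t^\delta$ is subadditive for $0 < \delta < 1$, one has $(M_\beta f)^\delta \leq (M_\beta f_1)^\delta + (M_\beta f_2)^\delta$ pointwise. For the \emph{local} part, I would apply Lemma \ref{PreparatoryLemmaConstructionA1} with $T = M_\beta$ (whose weak-type $(1,1)$ bound is the unweighted version of Theorem \ref{WekTypeBoundedness}) and $E = Q$, which has $H^\beta_\infty(Q) < \infty$. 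This yields
\begin{equation*}
\int_Q (M_\beta f_1)^\delta \, \mathrm{d}H^\beta_\infty \leq K'\, (H^\beta_\infty(Q))^{1-\delta} \Bigl( \int_{2Q} |f| \, \mathrm{d}H^\beta_\infty \Bigr)^{\delta}.
\end{equation*}
Dividing by $H^\beta_\infty(Q)$ and using $H^\beta_\infty(2Q) \leq C H^\beta_\infty(Q)$ (which holds because $H^\beta_\infty(Q) \approx \ell(Q)^\beta$) gives an upper bound of the form $C' \bigl( \tfrac{1}{H^\beta_\infty(2Q)} \int_{2Q} |f| \, \mathrm{d}H^\beta_\infty \bigr)^\delta \leq C' (M_\beta f(x))^\delta$, since $x \in Q \subseteq 2Q$.

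For the \emph{nonlocal} part, I would show the pointwise bound $M_\beta f_2(y) \leq C\, M_\beta f(x)$ for every $y \in Q$, with $C$ depending only on $n$ and $\beta$. This is a purely geometric observation: any cube $R$ containing $y \in Q$ and meeting $(2Q)^c$ must satisfy $\ell(R) \geq c\, \ell(Q)$, hence $R$ is contained in some cube $R^\ast \ni x$ with $\ell(R^\ast) \lesssim \ell(R)$; comparing the averages of $|f|$ over $R$ and $R^\ast$ (again using $H^\beta_\infty(R) \approx \ell(R)^\beta$) yields the claim. Integrating $(M_\beta f_2)^\delta \leq C^\delta (M_\beta f(x))^\delta$ over $Q$ and dividing by $H^\beta_\infty(Q)$ gives the matching bound $C^\delta (M_\beta f(x))^\delta$.

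The main obstacle I anticipate is the nonlocal geometric step. In the classical Euclidean setting this is straightforward, but here we must be careful that all manipulations take place within cubes (so that the expressions make sense as Choquet averages with respect to $H^\beta_\infty$) and that the constants produced when enlarging cubes $R \subseteq R^\ast$ are uniformly controlled. One must also ensure that the hypothesis $M_\beta f < \infty$ q.e.\ is enough to make $(M_\beta f)^\delta$ a legitimate weight (in particular a q.e.\ finite, non-negative function), and that the preparatory lemma applies even though $f_1$ may not lie in $L^1(\mathbb{R}^n, H^\beta_\infty)$ a priori --- this is handled by noting that local integrability of $f$ over compact sets implies $f_1 \in L^1(\mathbb{R}^n, H^\beta_\infty)$.
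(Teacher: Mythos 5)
Your proposal is correct and follows essentially the same route as the paper's proof: the Coifman--Rochberg decomposition $f = f\chi_{2Q} + f\chi_{(2Q)^c}$, with Lemma \ref{PreparatoryLemmaConstructionA1} (applied to $T = M_\beta$ on $E = Q$) handling the local piece and the geometric observation that cubes meeting both $Q$ and $(2Q)^c$ have length comparable to $\ell(Q)$ handling the nonlocal piece. The technical caveats you flag (sublinearity via the dyadic content, local integrability of $f_1$, the comparison $H^\beta_\infty(Q) \approx \ell(Q)^\beta$) are exactly the ones the paper addresses.
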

		\begin{proof}
			Given $x \in \mathbb{R}^n$, we fix a cube $Q \subseteq \mathbb{R}^n$ with $x \in Q$. To show that $\left( M_{\beta}f \right)^{\delta}$ satisfies the $A_{1, \beta}$ condition, it is enough to show that with $Q$ fixed arbitrarily, we have,
			\begin{equation}
				\label{GoalEquation}
				\frac{1}{H^{\beta}_{\infty} \left( Q \right)} \int\limits_{Q} \left( M_{\beta}f \right)^{\delta} \ \mathrm{d}H^{\beta}_{\infty} \leq K \left( M_{\beta}f \right)^{\delta} \left( x \right),
			\end{equation}
			for some constant $K > 0$. In order to do so, we decompose $f$ as
			$$f = f \cdot \chi_{2Q} + f \cdot \chi_{\mathbb{R}^n \setminus \left( 2Q \right)},$$
			where, $2Q$ is the cube with the same center as that of $Q$ but twice its length. For the simplicity of notations, let us write $f_1 = f \cdot \chi_{2Q}$ and $f_2 = f \cdot \chi_{\mathbb{R}^n \setminus \left( 2Q \right)}$. Since the Choquet integral for the dyadic Hausdorff content $H^{\beta, Q_0}_{\infty}$ is sublinear and the dyadic Hausdorff content is equivalent to $H^{\beta}_{\infty}$, we have,
			$$M_{\beta}f \left( y \right) \leq K \left( M_{\beta}f_1 \left( y \right) + M_{\beta}f_2 \left( y \right) \right),$$
			for some constant $K > 0$ that depends only on the dimensions $n$ and $\beta$. Further, since $0 \leq \delta < 1$, we see that
			$$\left( M_{\beta}f \right)^{\delta} \left( y \right) \leq K^{\delta} \left( \left( M_{\beta}f_1 \right)^{\delta} \left( y \right) + \left( M_{\beta}f_2 \right)^{\delta} \left( y \right) \right),$$
			for every $y \in \mathbb{R}^n$. From this point onward, we use the alphabet $K$ to denote a positive constant that depends only on the parameters of the statement, and whose value might be different at every occurrence.
			
			Since $M_{\beta}$ is satisfies the weak type $(1, 1)$ inequality with respect to $H^{\beta}_{\infty}$, we have from Lemma \ref{PreparatoryLemmaConstructionA1},
			$$\frac{1}{H^{\beta}_{\infty} \left( Q \right)} \int\limits_{Q} \left( Mf_1 \right)^{\delta} \mathrm{d}H^{\beta}_{\infty} \leq K \left( \frac{1}{H^{\beta}_{\infty} \left( Q \right)} \int\limits_{2Q} \left| f_1 \right| \mathrm{d}H^{\beta}_{\infty} \right)^{\delta}.$$
			Now, we interpret (upto a constant) $H^{\beta}_{\infty}$ as the cubic Hausdorff content (see Definition \ref{CubicHC}) so that $H^{\beta}_{\infty} \left( Q \right) = \left( \ell \left( Q \right) \right)^{\beta} = \frac{H^{\beta}_{\infty} \left( 2Q \right)}{2^{\beta}}$. That is, we have,
			\begin{equation}
				\label{F1Equation}
				\frac{1}{H^{\beta}_{\infty} \left( Q \right)} \int\limits_{Q} \left( Mf_1 \right)^{\delta} \mathrm{d}H^{\beta}_{\infty} \leq K \left( \frac{1}{H^{\beta}_{\infty} \left( 2Q \right)} \int\limits_{2Q} \left| f \right| \mathrm{d}H^{\beta}_{\infty} \right)^{\delta} \leq K \left( M_{\beta}f \right)^{\delta} \left( x \right).
			\end{equation}
			Next, we take a cube $Q'$ containing a point $y \in Q$ such that $\int\limits_{Q'} \left| f_2 \right| \mathrm{d}H^{\beta}_{\infty} > 0$. Since $f_2$ is supported outside of $2Q$, we must have that the length of $Q'$ is at least half that of $Q$, for if not $Q' \subseteq 2Q$ and the integral would be zero. This also gives that $2Q \subseteq K \cdot Q'$, for a constant $K > 0$ that depends only on the dimension $n$ (for instance, one may take $K = 4 \sqrt{n}$). This gives that $K \cdot Q$ contains $x$.
			
			Thus, for any $y \in Q$ and a cube $Q'$ containing $y$ with $\int\limits_{Q'} \left| f_2 \right| \mathrm{d}H^{\beta}_{\infty} > 0$, we must have,
			$$\frac{1}{H^{\beta}_{\infty} \left( Q' \right)} \int\limits_{Q'} \left| f_2 \right| \mathrm{d}H^{\beta}_{\infty} \leq \frac{K}{H^{\beta}_{\infty} \left( K \cdot Q \right)} \int\limits_{K \cdot Q} \left| f \right| \mathrm{d}H^{\beta}_{\infty} \leq K M_{\beta}f \left( x \right).$$
			Therefore,
			$$M_{\beta}f_2 \left( y \right) \leq K \ M_{\beta}f \left( x \right),$$
			for every $y \in Q$. This leads us to,
			\begin{equation}
				\label{F2Equation}
				\frac{1}{H^{\beta}_{\infty} \left( Q \right)} \int\limits_{Q} \left( M_{\beta}f_2 \right)^{\delta} \left( y \right) \mathrm{d}H^{\beta}_{\infty} \left( y \right) \leq K \left( M_{\beta}f \right)^{\delta} \left( x \right).
			\end{equation}
			Combining Equations \eqref{F1Equation} and \eqref{F2Equation} and using the fact that the Hausdorff content and the dyadic Hausdorff content are equivalent with the Choquet integral of the latter being sublinear, we get Equation \eqref{GoalEquation}. This completes the proof!
		\end{proof}
		We are now in a position to give the extrapolation theorem. The following analogue of Jones factorization theorem in the capacitary setting (see Theorem 1.10 of \cite{Huang}) is important in our proof of the extrapolation theorem.
		\begin{theorem}[Jones Factorization Theorem]
			\label{JFT}
			For $p \geq 1$, a weight function $w \in A_{p, \beta}$ is and only if there are weights $w_0, w_1 \in A_{1, \beta}$ such that $w = w_0 w_1^{1 - p}$.
		\end{theorem}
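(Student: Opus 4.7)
The plan is to prove the two directions separately, with the sufficient direction by a direct averaging argument and the necessary direction by a Rubio de Francia iteration adapted to the Choquet setting.

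For the sufficient direction, assume $w_0, w_1 \in A_{1,\beta}$ and set $w := w_0 w_1^{1-p}$. I would first rewrite the $A_{1,\beta}$ condition (Definition \ref{A1ConditionDefinition}) in the equivalent form that $\frac{1}{H^\beta_\infty(Q)}\int_Q w_i\,\mathrm{d}H^\beta_\infty$ is controlled by a quasi-essential infimum of $w_i$ on $Q$, which follows from Corollary \ref{LDTCor}. Substituting these bounds into the two averages
\begin{equation*}
\frac{1}{H^\beta_\infty(Q)}\int_Q w_0 w_1^{1-p}\,\mathrm{d}H^\beta_\infty \quad \text{and} \quad \frac{1}{H^\beta_\infty(Q)}\int_Q w_0^{1-p'} w_1\,\mathrm{d}H^\beta_\infty,
\end{equation*}
after pulling the negative powers of $w_1$ and $w_0$ out of their integrals as essential suprema, and multiplying the first bound by the $(p-1)$-th power of the second, makes all the essential infima cancel (thanks to $(1-p')(p-1)=-1$), leaving a uniform constant bound that is precisely the $A_{p,\beta}$ condition \eqref{ApCondition}.

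For the necessary direction, I would implement a Rubio de Francia iteration in the capacitary setting. Given $w \in A_{p,\beta}$, Proposition \ref{ApApPrimeCharacterization} yields $\sigma := w^{1-p'} \in A_{p',\beta}$, so Theorem \ref{ApWeightsTheorem} guarantees boundedness of $M_\beta$ on both $L^p(\mathbb{R}^n, w\,\mathrm{d}H^{\beta,Q_0}_\infty)$ and $L^{p'}(\mathbb{R}^n, \sigma\,\mathrm{d}H^{\beta,Q_0}_\infty)$. Working with the dyadic content so that the Choquet integral is sublinear (and hence the $L^p$ triangle inequality is available), I would construct a sublinear operator $S$, assembled from $M_\beta$ together with suitable powers of $w$ and $\sigma$, and bounded on a mixed weighted $L^p$ space with some norm $K > 0$. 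For a strictly positive initial datum $h_0$ in this space, the Rubio de Francia iterate
\begin{equation*}
R(h_0) := \sum_{k=0}^\infty \frac{S^k(h_0)}{(2K)^k}
\end{equation*}
converges, dominates $h_0$, and satisfies $S(R(h_0)) \leq 2K\,R(h_0)$ pointwise. The operator $S$ will be tuned so that this inequality unpacks into $M_\beta(R(h_0)^a w) \leq C R(h_0)^a w$ and $M_\beta(R(h_0)^b \sigma) \leq C R(h_0)^b \sigma$ with exponents $a, b$ chosen so that $w_0 := R(h_0)^a w$ and $w_1 := R(h_0)^b \sigma$ (both then automatically in $A_{1,\beta}$) multiply to give exactly $w = w_0 w_1^{1-p}$ after using the identity $(1-p')(1-p) = 1$.

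The main obstacle will be designing $S$ so that the exponents balance to reproduce $w$ exactly, and ensuring sublinearity is available throughout the iteration; as in other parts of the paper, this forces one to work with the dyadic content $H^{\beta,Q_0}_\infty$ and to appeal to its equivalence with $H^\beta_\infty$ only at the end, when translating the resulting pointwise $A_{1,\beta}$ conditions. A secondary subtlety is the quasi-everywhere interpretation of pointwise inequalities extracted from bounds on averages, which is handled by routine invocations of the Lebesgue Differentiation Theorem (Theorem \ref{LDT}).
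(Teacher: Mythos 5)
First, a point of reference: the paper does not prove this theorem at all --- it is imported verbatim from Theorem 1.10 of \cite{Huang} and used as a black box in the proof of Theorem \ref{ExtrapolationTheorem}, so there is no in-paper argument to compare against. Judged on its own terms, your sufficiency direction is essentially complete and correct: the $A_{1,\beta}$ condition gives $\frac{1}{H^{\beta}_{\infty}(Q)}\int_{Q} w_i \, \mathrm{d}H^{\beta}_{\infty} \leq K \cdot \mathrm{q.e.}\text{-}\inf_{Q} w_i$ directly from the definition of $M_{\beta}$ (note that Corollary \ref{LDTCor} gives the \emph{reverse} comparison $w \leq K M_{\beta}w$ and is not what you need here), the identity $(1-p)(1-p')=1$ turns $w^{1-p'}$ into $w_0^{1-p'}w_1$, monotonicity of the Choquet integral lets you pull the negative powers out as quasi-essential suprema, and the infima cancel exactly as in the classical case.

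The necessity direction, however, is a plan rather than a proof, and it stops precisely where the work begins. You never specify $S$, the space on which it is bounded, or the exponents $a$ and $b$; the phrase ``tuned so that the exponents balance'' is the entire content of the Coifman--Jones--Rubio de Francia argument, and in the capacitary setting there are two concrete obstructions you would have to clear before the scheme closes. First, convergence of $R(h_0)$ and the key inequality $S(R(h_0)) \leq 2K\,R(h_0)$ require $S$ to be \emph{countably} subadditive with constant exactly $1$: if $M_{\beta}$, and hence $S$, only satisfies a quasi-triangle inequality with constant $C>1$ --- which is all one retains after passing between $H^{\beta}_{\infty}$ and $H^{\beta,Q_0}_{\infty}$ --- then applying $S$ to the series accumulates factors of $C^k$ and the telescoping bound fails. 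You would need to fix the dyadic content once and for all, define $M_{\beta}$ and $S$ relative to it, and justify interchanging $S$ with the infinite sum (a monotone-convergence statement for Choquet integrals that the paper nowhere records). Second, boundedness of $S$ on the mixed weighted space requires that specific auxiliary weights built from $w$ and $\sigma = w^{1-p'}$ lie in specific $A_{q,\beta}$ classes so that Theorem \ref{ApWeightsTheorem} applies; this is not automatic and is not supplied by anything quoted in the paper. Since the author simply cites \cite{Huang} for this statement, you should either do the same or carry out the construction of $S$ and the verification of these memberships in full.
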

		\begin{theorem}[Extrapolation Theorem]
			\label{ExtrapolationTheorem}
			Let $1 < p_0 < \infty$ be fixed. Let $T$ be an operator that is bounded on $L^{p_0} \left( \mathbb{R}^n, w \mathrm{d}H^{\beta}_{\infty} \right)$, for every $w \in A_{p, \beta}$ with the operator norm depending only on the $A_{p, \beta}$-constant of $w$. Then, we have the following.
			\begin{enumerate}
				\item[(A)] For any $1 < p < p_0$ and any $w \in A_{1, \beta}$, the operator $T$ is bounded on $L^p \left( \mathbb{R}^n, w \ \mathrm{d}H^{\beta}_{\infty} \right)$.
				\item[(B)] For any $1 < p < \infty$ and every quasi-continuous $w \in A_{p, \beta}$, the operator $T$ is bounded on $L^p \left( \mathbb{R}^n, w \ \mathrm{d}H^{\beta}_{\infty} \right)$.
			\end{enumerate}
		\end{theorem}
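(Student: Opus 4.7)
The strategy is a Hausdorff-capacitary adaptation of Rubio de Francia's extrapolation: for each $f$, construct an auxiliary weight $W \in A_{p_0,\beta}$ tailored to $f$ (and, in the range $p>p_0$, also to a dual test function), invoke the $L^{p_0}$-hypothesis for $W$, and close the loop with Hölder's inequality. The essential tools are Lemma \ref{DualityLemma} (duality), Theorem \ref{HolderInequalityTheorem} (Hölder), Theorem \ref{ConstructionA1Weights} (the $A_{1,\beta}$-construction from $M_\beta$), Theorem \ref{JFT} (Jones factorization), and Theorem \ref{ApWeightsTheorem} (the $L^p$-boundedness of $M_\beta$).

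For Part (A) and for the sub-range $1<p<p_0$ of Part (B): given $w \in A_{p,\beta}$ (quasi-continuous; with $w \in A_{1,\beta}$ in Part (A)), Jones factorization (Theorem \ref{JFT}) writes $w=w_0w_1^{1-p}$ with $w_0,w_1\in A_{1,\beta}$ (taking $w_1\equiv 1$ in Part (A)). Define
\begin{equation*}
u_1 := w_1^{(p-1)/(p_0-1)}\,(M_\beta|f|)^{(p_0-p)/(p_0-1)}.
\end{equation*}
The second factor is in $A_{1,\beta}$ by Theorem \ref{ConstructionA1Weights} (exponent in $[0,1)$); the first is in $A_{1,\beta}$ by a Jensen-via-Hölder argument (using Theorem \ref{HolderInequalityTheorem}); and since the exponents are non-negative and sum to $1$, the product $u_1$ is an $A_{1,\beta}$ weight, as a geometric mean of $A_{1,\beta}$ weights. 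Hence $W:=w_0u_1^{1-p_0}\in A_{p_0,\beta}$ by Jones, and direct algebra gives $W=w\cdot(M_\beta|f|)^{p-p_0}$. Applying Hölder with exponents $p_0/p$ and $p_0/(p_0-p)$ to the splitting $|Tf|^p w=(|Tf|^p W^{p/p_0})(wW^{-p/p_0})$, and then the $L^{p_0}$-hypothesis on $W$, yields
\begin{equation*}
\int|Tf|^p w\,\mathrm{d}H^\beta_\infty \leq C\Big(\!\int|f|^{p_0}W\,\mathrm{d}H^\beta_\infty\Big)^{p/p_0}\Big(\!\int w\cdot(M_\beta|f|)^p\,\mathrm{d}H^\beta_\infty\Big)^{(p_0-p)/p_0}.
\end{equation*}
Corollary \ref{LDTCor} gives $|f|^{p_0}(M_\beta|f|)^{p-p_0}\leq |f|^p$ pointwise, so the first factor is $\leq\|f\|_{L^p(w\,\mathrm{d}H^\beta_\infty)}^p$; the second is $\leq C'\|f\|_{L^p(w\,\mathrm{d}H^\beta_\infty)}^p$ by Theorem \ref{ApWeightsTheorem}. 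The exponents combine as $p^2/p_0+p(p_0-p)/p_0=p$, producing $\|Tf\|_{L^p(w\,\mathrm{d}H^\beta_\infty)}\leq C''\|f\|_{L^p(w\,\mathrm{d}H^\beta_\infty)}$. (The case $p=p_0$ in (B) is immediate from the hypothesis.)

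For Part (B) with $p>p_0$, the direct Hölder fails because $p_0-p<0$, so I dualize via Lemma \ref{DualityLemma}. Substituting $v=uw$ gives $\|Tf\|_{L^p(w\,\mathrm{d}H^\beta_\infty)}=\sup_v\int|Tf|\,v\,\mathrm{d}H^\beta_\infty$ over $v\geq 0$ with $\|v\|_{L^{p'}(\sigma\,\mathrm{d}H^\beta_\infty)}=1$, where $\sigma:=w^{1-p'}\in A_{p',\beta}$ by Proposition \ref{ApApPrimeCharacterization}. Since $M_\beta$ is then bounded on $L^{p'}(\sigma\,\mathrm{d}H^\beta_\infty)$, the Rubio de Francia iterate $\mathcal{R}v:=\sum_{k\geq0}(2K)^{-k}M_\beta^k v$ satisfies $v\leq\mathcal{R}v$, $\|\mathcal{R}v\|_{L^{p'}(\sigma)}\leq 2$, and $\mathcal{R}v\in A_{1,\beta}$. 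With Jones $w=w_0w_1^{1-p}$, define
\begin{equation*}
W:=(\mathcal{R}v)^{(p-p_0)/(p-1)}\,w_0^{(p_0-1)/(p-1)}\,w_1^{1-p_0}.
\end{equation*}
The first two factors form an $A_{1,\beta}$ weight by the geometric-mean argument (the exponents $(p-p_0)/(p-1)$ and $(p_0-1)/(p-1)$ lie in $(0,1)$ and sum to $1$), so $W\in A_{p_0,\beta}$ by Jones. A two-step Hölder---first with exponents $(p_0,p_0')$ on $\int|Tf|\,\mathcal{R}v\,\mathrm{d}H^\beta_\infty$ (with the $L^{p_0}$-hypothesis applied to $W$), and then with $(p/p_0,\,p/(p-p_0))$ on $\int|f|^{p_0}W\,\mathrm{d}H^\beta_\infty$---forces all residual factors to collapse to a power of $\|\mathcal{R}v\|_{L^{p'}(\sigma)}\leq 2$ by direct exponent arithmetic, producing $\|Tf\|_{L^p(w\,\mathrm{d}H^\beta_\infty)}\leq C\|f\|_{L^p(w\,\mathrm{d}H^\beta_\infty)}$.

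The primary obstacle is the algebraic bookkeeping: the choices of $u_1$ and $W$ are forced by the requirement that all residual Hölder factors collapse to either $w\cdot(M_\beta|f|)^p$ (controlled by Theorem \ref{ApWeightsTheorem}) or a bounded power of $\|\mathcal{R}v\|_{L^{p'}(\sigma)}$. A subtler issue is the Rubio de Francia iteration in the capacitary setting: one must verify quasi-everywhere convergence of $\mathcal{R}v$ and the pointwise bound $M_\beta(\mathcal{R}v)\leq 2K\mathcal{R}v$, both of which rely on the sublinearity of the Choquet integral against the dyadic Hausdorff content (strong subadditivity). One must also handle the fact that $M_\beta|f|$ may vanish or be infinite on an exceptional set, which is taken care of by Corollary \ref{LDTCor} and the standing assumption that $f \in L^p(\mathbb{R}^n, w\,\mathrm{d}H^\beta_\infty)$.
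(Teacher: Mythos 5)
Your Part (A) is essentially the paper's own argument: the auxiliary weight you build, $W=w_0u_1^{1-p_0}=w\cdot(M_\beta|f|)^{p-p_0}$, is exactly the weight the paper uses (with $w_1\equiv 1$ your Jones-factorization packaging collapses to it), and the subsequent H\"older step with exponents $p_0/p$ and $(p_0/p)'$, the use of Corollary \ref{LDTCor}, and the $L^p(w\,\mathrm{d}H^\beta_\infty)$-boundedness of $M_\beta$ all match. Part (B) is where you genuinely diverge. The paper handles all $1<p<\infty$ at once: it dualizes at the exponent $p/p_1$ for some $p_1<\min\{p,p_0\}$ obtained from the self-improving property (Theorem \ref{SelfImproving}), majorizes the dual test weight $uw$ by $\bigl(M_\beta(u^sw^s)\bigr)^{1/s}\in A_{1,\beta}$ using the reverse-H\"older exponent from Theorem \ref{ImprovingPower}, and then feeds this $A_{1,\beta}$ weight back into Part (A). You instead split at $p_0$ and, for $p>p_0$, run the Rubio de Francia iteration $\mathcal{R}v=\sum_k(2K)^{-k}M_\beta^kv$ on the dual side together with a Jones-factorized weight $W=(\mathcal{R}v)^{(p-p_0)/(p-1)}w_0^{(p_0-1)/(p-1)}w_1^{1-p_0}$; I checked the exponent arithmetic you leave implicit (both residual H\"older factors reduce to $\int(\mathcal{R}v)^{p'}\sigma\,\mathrm{d}H^\beta_\infty$ with $\sigma=w^{1-p'}$), and it does close. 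The trade-off: the paper's route uses only the machinery already proved in \cite{Huang} plus finite sublinearity of the dyadic Choquet integral, at the cost of needing quasi-continuity of $w$ for Theorems \ref{ImprovingPower} and \ref{SelfImproving}; your route avoids those two theorems entirely (so it would, if completed, weaken the quasi-continuity hypothesis), but it rests on two pieces of infrastructure that neither you nor the paper establishes: (i) the stability of $A_{1,\beta}$ under geometric means $u^\theta v^{1-\theta}$ (provable from the capacitary H\"older inequality, but it should be written out), and (ii) the convergence of $\mathcal{R}v$ and the bound $M_\beta(\mathcal{R}v)\lesssim\mathcal{R}v$, which require \emph{countable} sublinearity of the Choquet integral against $H^{\beta,Q_0}_\infty$, i.e.\ a monotone-convergence statement beyond the finite strong subadditivity the paper invokes. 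You flag (ii) yourself; until it is supplied, the $p>p_0$ case of your argument is a sketch rather than a proof, whereas the paper's version is complete given the cited results.
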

		\begin{proof}~
			\begin{enumerate}
				\item[(A)] Let us fix $1 < p < \infty$ and $w \in A_{1, \beta}$. Then, we have that $w \in A_{p, \beta}$ (see Remark 1.5 of \cite{Huang}). From Theorem \ref{ApWeightsTheorem}, we know that $M_{\beta}$ is bounded on $L^p \left( \mathbb{R}^n, w \ \mathrm{d}H^{\beta}_{\infty} \right)$. Hence, for any $f \in L^p \left( \mathbb{R}^n, w \ \mathrm{d}H^{\beta}_{\infty} \right)$, $M_{\beta} f \left( x \right)$ is finite $H^{\beta}_{\infty}$-quasi-everywhere. Therefore, from Lemma \ref{PreparatoryLemmaConstructionA1}, we have that $\left( M_{\beta}f \right)^{\frac{p_0 - p}{p_0 - 1}} \in A_{1, \beta}$. By the Jones factorization theorem, we have that $w \left( M_{\beta}f \right)^{p - p_0} \in A_{p, \beta}$. Now, using the equivalence of $H^{\beta}_{\infty}$ and $H^{\beta, Q_0}_{\infty}$ and H\"{o}lder's inequality with exponents $\left( \frac{p_0}{p} \right)$ and $\left( \frac{p_0}{p} \right)'$, we get
				\begin{align*}
					&\int\limits_{\mathbb{R}^n} \left| Tf \right|^p w \ \mathrm{d}H^{\beta}_{\infty} \\
					&\leq K \left( \int\limits_{\mathbb{R}^n} \left| Tf \right|^{p_0} \left( M_{\beta}f \right)^{p - p_0} w \ \mathrm{d}H^{\beta}_{\infty} \right)^{\frac{p}{p_0}} \left( \int\limits_{\mathbb{R}^n} \left( M_{\beta}f \right)^p w \ \mathrm{d}H^{\beta}_{\infty} \right)^{1 - \frac{p}{p_0}}.
				\end{align*}
				Using the boundedness of $T$ on $L^{p_0} \left( \mathbb{R}^n, w \left( M_{\beta}f \right)^{p - p_0} \mathrm{d}H^{\beta}_{\infty} \right)$ and that of $M_{\beta}$ on $L^p \left( \mathbb{R}^n, w \ \mathrm{d}H^{\beta}_{\infty} \right)$, we get
				$$\int\limits_{\mathbb{R}^n} \left| Tf \right|^p w \ \mathrm{d}H^{\beta}_{\infty} \leq K \left( \int\limits_{\mathbb{R}^n} \left| f \right|^{p_0} \left( M_{\beta}f \right)^{p - p_0} w \ \mathrm{d}H^{\beta}_{\infty} \right)^{\frac{p}{p_0}} \left( \int\limits_{\mathbb{R}^n} \left| f \right|^p w \ \mathrm{d}H^{\beta}_{\infty} \right)^{1 - \frac{p}{p_0}}.$$
				Finally, from Corollary \ref{LDTCor} and the fact that $p < p_0$, we get
				$$\int\limits_{\mathbb{R}^n} \left| Tf \right|^p w \ \mathrm{d}H^{\beta}_{\infty} \leq K \left( \int\limits_{\mathbb{R}^n} \left| f \right|^p w \ \mathrm{d}H^{\beta}_{\infty} \right).$$
				\item[(B)] To prove this part of the result, we use Lemma \ref{DualityLemma}. First, let us observe that for any $1 < p_1 < p$, we have,
				$$\| Tf \|_{L^p \left( \mathbb{R}^n, w \ \mathrm{d}H^{\beta}_{\infty} \right)}^{p_1} = \| |Tf|^{p_1} \|_{L^{\frac{p}{p_1}} \left( \mathbb{R}^n, w \ \mathrm{d}H^{\beta}_{\infty} \right)}.$$
				Due to Lemma \ref{DualityLemma}, it is enough to show for every $u \in L^{\left( \frac{p}{p_1} \right)'} \left( \mathbb{R}^n, w \ \mathrm{d}H^{\beta}_{\infty} \right)$ with $u \geq 0$ and $\| u \|_{L^{\left( \frac{p}{p_1} \right)'} \left( \mathbb{R}^n, w \ \mathrm{d}H^{\beta}_{\infty} \right)} = 1$, that,
				$$\int\limits_{\mathbb{R}^n} \left| Tf \right|^{p_1} u w \ \mathrm{d}H^{\beta}_{\infty} \leq K \| f \|_{L^p \left( \mathbb{R}^n, w \ \mathrm{d}H^{\beta}_{\infty} \right)}^{p_1}.$$
				We now begin the proof of (B). For the same, we fix $1 < p_1 < \min \left\lbrace p, p_0 \right\rbrace$ and a quasi-continuous $w \in A_{\frac{p}{p_1}, \beta}$. We also fix a $u \in L^{\left( \frac{p}{p_1} \right)'} \left( \mathbb{R}^n, w \ \mathrm{d}H^{\beta}_{\infty} \right)$ as above. Then, from Theorem \ref{ImprovingPower}, we get some $s_0 > 1$ such that 
				$$w^{1 + \left( \frac{p}{p_1} \right)' \frac{\left( s_0 - 1 \right)}{\left( \frac{p}{p_1} \right)' - s_0}} \in A_{\frac{p}{p_1}, \beta}.$$
				Similarly, due to Theorems \ref{ApApPrimeCharacterization} and \ref{SelfImproving}, we get some $s_1 > 1$ such that $w^{1 - \left( \frac{p}{p_1} \right)'} \in A_{\frac{\left( \frac{p}{p_1} \right)'}{s_1}, \beta}$. We may assume that $s_0 = s_1 = s$, by taking a maximum. That is, we have some $s > 1$ such that $w^{1 + \left( \frac{p}{p_1} \right)' \frac{\left( s - 1 \right)}{\left( \frac{p}{p_1} \right)' - s}} \in A_{\frac{p}{p_1}, \beta}$ and $w^{1 - \left( \frac{p}{p_1} \right)'} \in A_{\frac{\left( \frac{p}{p_1} \right)'}{s}, \beta}$.
				
				Now, using H\"{o}lder's inequality with exponents $r = \frac{\left( \frac{p}{p_1} \right)'}{s}$ and $r'$, we get for every compact set $K \subseteq \mathbb{R}^n$,
				\begin{align*}
					\int\limits_{K} u^sw^s \mathrm{d}H^{\beta, Q_0}_{\infty} &\leq \left( \int\limits_{K} u^{\left( \frac{p}{p_1} \right)'} w \ \mathrm{d}H^{\beta, Q_0}_{\infty} \right)^{\frac{1}{r}} \left( \int\limits_{K} w^{1 + r' \left( s - 1 \right)} \mathrm{d}H^{\beta, Q_0}_{\infty} \right)^{\frac{1}{r'}} \\
					&= \left( \int\limits_{K} w^{1 + r' \left( s - 1 \right)} \mathrm{d}H^{\beta, Q_0}_{\infty} \right)^{\frac{1}{r'}}.
				\end{align*}
				By the choice of $s$, we know that $w^{1 + r' \left( s - 1 \right)}$ is locally integrable and therefore, $u^sw^s$ is locally integrable. Using Lemma \ref{ConstructionA1Weights}, we have that $\left( M_{\beta} \left( u^sw^s \right) \right)^{\frac{1}{s}} \in A_{1, \beta}$. Further, due to Corollary \ref{LDTCor} and the (reverse) Jensen's inequality (see Theorem 2.1 of \cite{ZhangJensen}), we get
				$$uw \leq M_{\beta}\left( uw \right) \leq \left( M_{\beta}\left( u^sw^s \right) \right)^{\frac{1}{s}}.$$
				Using this observation and part (A) of the theorem, we have,
				\begin{align*}
					\int\limits_{\mathbb{R}^n} \left| Tf \right|^{p_1} uw \mathrm{d}H^{\beta, Q_0}_{\infty} &\leq K \int\limits_{\mathbb{R}^n} \left| Tf \right|^{p_1} \left( M_{\beta} \left( u^sw^s \right) \right)^{\frac{1}{s}} \mathrm{d}H^{\beta, Q_0}_{\infty} \\
					&\leq K \int\limits_{\mathbb{R}^n} \left| f \right|^{p_1} \left( M_{\beta}\left( u^sw^s \right) \right)^{\frac{1}{s}} \mathrm{d}H^{\beta, Q_0}_{\infty}.
				\end{align*}
				Finally, using the H\"{o}lder inequality with exponents $\frac{p}{p_1}$ and $\left( \frac{p}{p_1} \right)'$, we get
				\begin{align*}
					&\int\limits_{\mathbb{R}^n} \left| Tf \right|^{p_1} uw \mathrm{d}H^{\beta, Q_0}_{\infty} \\
					&\leq K \left( \int\limits_{\mathbb{R}^n} \left| f \right|^{p} w \ \mathrm{d}H^{\beta, Q_0}_{\infty} \right)^{\frac{p_1}{p}} \left( \int\limits_{\mathbb{R}^n} \left( M_{\beta} \left( u^sw^s \right) \right)^{\frac{\left( \frac{p}{p_1} \right)'}{s}} w^{1 - \left( \frac{p}{p_1} \right)'} \mathrm{d}H^{\beta, Q_0}_{\infty} \right)^{\frac{1}{\left( \frac{p}{p_1} \right)'}}.
				\end{align*}
				Now, we recall that $w^{1 - \left( \frac{p}{p_1} \right)'} \in A_{\frac{\left( \frac{p}{p_1} \right)'}{s}, \beta}$ so that by the equivalence of $H^{\beta, Q_0}_{\infty}$ and $H^{\beta}_{\infty}$ and Theorem \ref{ApWeightsTheorem}, we get
				\begin{align*}
					&\left( \int\limits_{\mathbb{R}^n} \left( M_{\beta} \left( u^sw^s \right) \right)^{\frac{\left( \frac{p}{p_1} \right)'}{s}} w^{1 - \left( \frac{p}{p_1} \right)'} \mathrm{d}H^{\beta, Q_0}_{\infty} \right)^{\frac{1}{\left( \frac{p}{p_1} \right)'}} \\
					&\leq K \left( \int\limits_{\mathbb{R}^n} u^{\left( \frac{p}{p_1} \right)'} w \ \mathrm{d}H^{\beta, Q_0}_{\infty} \right)^{\frac{1}{\left( \frac{p}{p_1} \right)'}} = K,
				\end{align*}
				due to the choice of $u$. Thus, we have shown that (by taking a supremum over $u$) for any $1 < p_1 < \min \left\lbrace p, p_0 \right\rbrace$ and a quasi-continuous weight $w \in A_{\frac{p}{p_1}, \beta}$, the operator $T$ is bounded on $L^p \left( \mathbb{R}^n, w \ \mathrm{d}H^{\beta}_{\infty} \right)$.
				
				Now, let $w \in A_{p, \beta}$ be a quasi-continuous weight. Then, by Theorem \ref{SelfImproving}, there is some $1 < p_1 < p$ such that $w \in A_{\frac{p}{p_1}, \beta}$. Due to the monotonicity of the classes $A_{p, \beta}$, we may assume that $p_1 < p_0$ so that $1 < p_1 < \min \left\lbrace p, p_0 \right\rbrace$. This leads us to the previous analysis and completes the proof of the extrapolation result!
			\end{enumerate}
		\end{proof}
	\section*{Acknowledgement}
		The author extends his gratitude towards the Ministry of Education, Government of India, for awarding the Prime Minister Research Fellowship (PMRF), grant number: 2101706, that has funded this research.

\bibliographystyle{amsplain}

\begin{thebibliography}{99}
		\bibitem{Adams} David R. Adams and Lars I. Hedberg, \textit{Function Spaces and Potential Theory}, Spriner-Verlag, Berlin, 1996.
		\bibitem{Basak} Riju Basak, You-Wei Chen, Prasum Roychowdury and Daniel Spector, \textit{The Capacitary John-Nirenberg inequality revisited}, Advances in Calculus of Variations, 2025.
		\bibitem{Cerda} Joan Cerd\`{a}, Joaquim Mart\'{i}n and Pilar Silvestre, \textit{Capacitary function spaces}, Collectanea Mathematica, \textbf{62}(1), pp. 95--118, 2011.
		\bibitem{SpectorMaximal} You-Wei Benson Chen, Keng Hao Ooi and Daniel Spector, \textit{Capacitary maximal inequalities and applications}, Journal of Functional Analysis, \textbf{286}(12), Paper No. 110396, 31, 2024.
		\bibitem{Choquet} Gustave Choquet, \textit{Theory of Capacities}, Universit\'{e} de Grenoble Annales de l'Institut Fourier \textbf{5}, pp. 131--295, 1954.
		\bibitem{DuoandikoetxeaBook} Javier Duoandikoetxea, \textit{Fourier Analysis}, American Mathematical Society, Providence, RI, 2001.
		\bibitem{Huang} Long Huand, Yangzhi Zhang and Ciqiang Zhuo, \textit{Capacitary Muckenhoupt weights and weighted norm inequalities for the Hardy-Littlewood Maximal Operators}, arXiv:2509.23839, 2025.
		\bibitem{Mattila1} Pertti Mattila, \textit{Geometry of sets and measures in Euclidean spaces}, Cambridge University Press, Cambridge, 1995.
		\bibitem{Mattila2} Pertti Mattila, \textit{Fourier Analysis and Hausdorff Dimension}, Cambridge University Press, Cambridge, 2015.
		\bibitem{PonceSpector} Augusto C. Ponce and Daniel Spector, \textit{Some remarks on capacitary integrals and measure theory}, Potentials and partial differential equations--the legacy of David R. Adams, Advances in Analysis and Geometry, \textbf{8}, pp. 235--263, De Gruyter, Berlin, 2023.
		\bibitem{RDF} Jos\'{e} Luis Rubio de Francia, \textit{Factorization and extrapolation of weights}, Bulletin of the American Mathematical Society, \textbf{7}(2), pp. 393--395, 1982.
		\bibitem{Saito} Hiroki Saito, Hitoshi Tanaka and Toshiazu Watanabe, \textit{Fractional maximal operators with weighted Hausdorff content}, Positivity. An International Journal Devoted to Theory and Applications of Positivity, \textbf{23}(1), pp. 125--138, 2019.
		\bibitem{Tang} Lin Tang, \textit{Choquet Integrals, weighted Hausdorff content and maximal operators}, Georgian Mathematical Journal, \textbf{18}(3), pp. 587--596, 2011.
		\bibitem{ZhangJensen} Deli Zhang, Radko Mesiar and Endre Pap, \textit{Jensen's inequality for Choquet integral revisited and a note on Jensen's inequality for generalized Choquet integral}, Fuzzy Sets and Systems, \textbf{430}, pp. 79--87, 2022.
	\end{thebibliography}
		
\end{document}